\title
[Propagation of anisotropic Gabor wave front sets]
{Propagation of anisotropic Gabor wave front sets}
\author[P. Wahlberg]{Patrik Wahlberg}
\address{Dipartimento di Scienze Matematiche, Politecnico di Torino, Corso Duca degli Abruzzi 24,
10129 Torino, Italy}
\email{patrik.wahlberg[AT]polito.it}
\numberwithin{equation}{section}          %Detta gr att man fï¿œr
\newtheorem{thm}{Theorem}
\numberwithin{thm}{section}
\newcommand{\rubrik}{}
\newtheorem{prop}[thm]{Proposition}
\newtheorem{lem}[thm]{Lemma}
\theoremstyle{definition}
\newtheorem{defn}[thm]{Definition}
\theoremstyle{remark}
\newtheorem{rem}[thm]{Remark}              %T o m hit r bara allmn
\newcommand{\scal}[2]{\langle #1,#2\rangle}
\newcommand{\pdd}[2] {\partial_{#1} ^{#2}}
\newcommand{\ro}{\mathbf R}
\newcommand{\no}{\mathbf N}
\newcommand{\rr}[1]{\mathbf R^{#1}}
\newcommand{\sr}[1]{\mathbf S^{#1}}
\newcommand{\sro}[1]{\mathbf S}
\newcommand{\nn}[1]{\mathbf N^{#1}}
\newcommand{\dd}{\mathrm {d}}
\newcommand{\ep}{\varepsilon}
\newcommand{\fy}{\varphi}
\newcommand{\cdo}{\, \cdot \, }
\newcommand{\supp}{\operatorname{supp}}
\newcommand{\wpr}{{\text{\footnotesize $\#$}}}
\newcommand{\eabs}[1]{\langle #1\rangle}
\newcommand{\GL}{\operatorname{GL}}
\newcommand{\charac}{\operatorname{char}}
\newcommand{\csupp}{\operatorname{conesupp}}
\newcommand{\rB}{\operatorname{B}}
\newcommand{\WF}{\mathrm{WF}}
\newcommand{\WFg}{\mathrm{WF_{\rm g}}}
\newcommand{\WFgs}{\mathrm{WF_{g}^{\it s}}}
\newcommand{\cS}{\mathscr{S}}
\newcommand{\cF}{\mathscr{F}}
\newcommand{\cK}{\mathscr{K}}
\newcommand{\J}{\mathcal{J}}
\newcommand{\wt}{\widetilde}
\newcommand{\wh}{\widehat}
\def\la{\langle}
\def\ra{\rangle}
\newcommand{\leqs}{\leqslant}
\newcommand{\geqs}{\geqslant}
\begin{document}

\begin{abstract}
We show a result on propagation of the anisotropic Gabor wave front set
for linear operators with a tempered distribution Schwartz kernel. 
The anisotropic Gabor wave front set is parametrized by a positive parameter
relating the space and frequency variables. 
The anisotropic Gabor wave front set of the Schwartz kernel is assumed to satisfy a graph type criterion. 
The result is applied to a class of evolution equations that generalizes the Schr\"odinger equation 
for the free particle. 
The Laplacian is replaced by any partial differential operator with constant coefficients, 
real symbol and order at least two. 
\end{abstract}

\keywords{Tempered distributions, global wave front sets, microlocal analysis, phase space, anisotropy, propagation of singularities, evolution equations}
\subjclass[2010]{46F05, 46F12, 35A27, 47G30, 35S05, 35A18, 81S30, 58J47, 47D06}

\maketitle

%%%%%%%%%%%%%%%%%%%%
\section{Introduction}\label{sec:intro}
%%%%%%%%%%%%%%%%%%%%

The paper treats the propagation of the anisotropic Gabor wave front set 
for a class of continuous linear operators. 

H\"ormander \cite{Hormander1} introduced in 1991 the Gabor wave front set of a tempered distribution as a closed 
conic subset of the phase space $T^* \rr d \setminus 0$. 
It consists of directions in $T^* \rr d \setminus 0$ of global singularities, in no neighborhood of which the short-time Fourier transform decays superpolynomially. 
The Gabor wave front set is empty precisely when the tempered distribution is a Schwartz function, 
so it records smoothness and decay at infinity simultaneously.

Recent works \cite{Carypis1,Cordero1,PRW1,Rodino2,Schulz1,Wahlberg1,Wahlberg2} treat the Gabor wave front set and similar concepts.
The Gabor wave front set is identical to Nakamura's homogeneous wave front set \cite{Nakamura1,Schulz1}. 
H\"ormander's original paper \cite{Hormander1} contains results on the action of a linear continuous operator on the Gabor wave front set. 
Propagation of the Gabor wave front set for the solution to evolution equations with quadratic Hamiltonian with non-negative real part is treated in \cite{PRW1,Wahlberg2}. 
The singular space of such a quadratic form, introduced by Hitrik and Pravda--Starov \cite{Hitrik1}, 
then plays a crucial role. 

We have defined and studied an anisotropic version of the Gabor wave front set, which is parametrized by $s > 0$, in \cite{Rodino4}. 
The new feature is to replace the superpolynomial decay along straight lines in phase space $T^* \rr d \setminus 0$, characteristic to the Gabor wave front set, by decay along curves of the form 
\begin{equation*}
\ro_+ \ni \lambda \mapsto ( \lambda x, \lambda^s \xi) \in T^* \rr d \setminus 0
\end{equation*}
where $(x,\xi) \in T^* \rr d \setminus 0$. 
The resulting wave front set is baptized to the anisotropic $s$-Gabor wave front set, and 
it is denoted $\WF_{\rm g}^s (u) \subseteq T^* \rr d \setminus 0$ for a tempered distribution $u \in \cS'(\rr d)$. 
If $s = 1$ we recover the standard Gabor wave front set. 

In \cite{Rodino4} we develop pseudodifferential calculus and microlocal analysis for the anisotropic $s$-Gabor wave front set, inspired by e.g. \cite{Cappiello0,Cappiello4} which treat anisotropic partial differential operators with polynomial coefficients. 
This means that we study pseudodifferential calculus with symbol classes that are anisotropic modifications of the standard 
Shubin symbols. The anisotropic symbols \cite{Shubin1} satisfy estimates of the form
\begin{equation*}
|\pdd x \alpha \pdd \xi \beta a(x,\xi)|
\lesssim ( 1 + |x| + |\xi|^{\frac1s} )^{m - |\alpha| - s |\beta|}, \quad (x,\xi) \in T^* \rr d, \quad \alpha, \beta \in \nn d. 
\end{equation*}
It also means results on microlocality and microellipticity in the anisotropic framework. 

For this purpose we benefit from ideas and techniques from papers on microlocal analysis that is anisotropic in the dual (frequency) 
variables only (see e.g. \cite{Parenti1}), as opposed to our anisotropy which refers to the space and frequency variables comprehensively. 
An overall summary of \cite{Rodino4} is an anisotropic version of Shubin's calculus of pseudodifferential operators \cite{Shubin1}. 

The anisotropic $s$-Gabor wave front describes accurately the global singularities of oscillatory functions of 
chirp type \cite[Theorem~7.1]{Rodino4}. These are exponentials with real polynomial phase functions. 

In this paper the chief result concerns propagation of the anisotropic $s$-Gabor wave front set by
a continuous linear operator $\cK: \cS(\rr d) \to \cS'(\rr d)$ defined by a Schwartz kernel $K \in \cS'(\rr {2d})$. 
Suppose that the $s$-Gabor wave front set of $K$ contains no points of the form $(x, 0, \xi, 0)  \in T^* \rr {2d} \setminus 0$
nor of the form $(0, y, 0, -\eta) \in T^* \rr {2d} \setminus 0$, with $x, y, \xi, \eta \in \rr d$. 
(Roughly speaking this amounts to that $\WF_{\rm g}^s (K)$ resembles the graph of an invertible matrix.)
Then $\cK: \cS(\rr d) \to \cS(\rr d)$ acts continuously, extends uniquely to a sequentially continuous linear operator $\cK: \cS'(\rr d) \to \cS'(\rr d)$, 
and for $u \in \cS'(\rr d)$ we have
\begin{equation}\label{eq:linearpropagation}
\WF_{\rm g}^s (\cK u) \subseteq \WF_{\rm g}^s (K)' \circ \WF_{\rm g}^s (u).  
\end{equation}
Here we use the notation 
\begin{equation*}
A' \circ B  = \{ (x,\xi) \in \rr {2d}: \,  \exists (y,\eta) \in B: \, (x,y,\xi,-\eta) \in A \}
\end{equation*}
for $A \subseteq \rr {4d}$ and $B \subseteq \rr {2d}$. 

The inclusion \eqref{eq:linearpropagation} is conceptually similar to propagation results for other 
types of wave front sets, local \cite{Hormander0}, or global \cite{Carypis1,PRW1,Wahlberg1}. 

As an application of the inclusion \eqref{eq:linearpropagation} we study propagation of the anisotropic $s$-Gabor wave front set
for the initial value Cauchy problem for an evolution equation of the form 
\begin{equation*}
\left\{
\begin{array}{rl}
\partial_t u(t,x) + i p(D_x) u (t,x) & = 0, \quad x \in \rr d, \\
u(0,\cdot) & = u_0 \in \cS'(\rr d)
\end{array}
\right.
\end{equation*}
where $p: \rr d \to \ro$ is a polynomial with real coefficients of order $m \geqs 2$. 
This generalizes the Schr\"odinger equation for the free particle where $m = 2$ and $p(\xi) = |\xi|^2$. 

Provided $s = \frac{1}{m-1}$ we show that $\WF_{\rm g}^s$ of the solution at time $t \in \ro$ equals $\WF_{\rm g}^s (u_0)$ 
transported by the Hamilton flow $\chi_t $ with respect to the principal part $p_m $ of $p(\xi)$, that is 
\begin{equation*}
( x (t),\xi(t) ) = \chi_t (x, \xi)
= (x + t \nabla p_m (\xi), \xi), \quad t \in \ro, \quad ( x, \xi ) \in T^* \rr d \setminus 0. 
\end{equation*}

The conclusion is again conceptually similar to other results on propagation of singularities 
\cite{Hormander0,Carypis1,Wahlberg1}, and generalizes known results when $p$ is a homogeneous 
quadratic form and $s = 1$ \cite{PRW1}. 

The article \cite{Wahlberg3} contains results similar to those of this paper, but in the functional framework 
of Gelfand--Shilov spaces and their ultradistribution dual spaces. 

The article is organized as follows. 
Notations and definitions are collected in Section \ref{sec:prelim}. 
Section \ref{sec:anisotropicgaborWF} recalls the definition of  
the anisotropic $s$-Gabor wave front set, and a result on tensorization is proved
as well as a characterization of the anisotropic $s$-Gabor wave front set
in terms of characteristic sets of symbols. 
Then Section \ref{sec:propagation} is devoted to a proof of the main result on propagation of the 
anisotropic $s$-Gabor wave front set. 
Finally Section \ref{sec:schrodinger} treats an application to a class of evolution equations of Schr\"odinger type.

%%%%%%%%%%%%%%%%%%%%
\section{Preliminaries}\label{sec:prelim}
%%%%%%%%%%%%%%%%%%%%

The unit sphere in $\rr d$ is denoted $\sr {d-1} \subseteq \rr d$. 
A ball of radius $r > 0$ centered in $x \in \rr d$ is denoted $\rB_r (x)$, and $\rB_r(0) = \rB_r$.  
The transpose of a matrix $A \in \rr {d \times d}$ is denoted $A^T$ and the inverse transpose 
of $A \in \GL(d,\ro)$ is $A^{-T}$. 
We write $f (x) \lesssim g (x)$ provided there exists $C>0$ such that $f (x) \leqs C \, g(x)$ for all $x$ in the domain of $f$ and of $g$. 
If $f (x) \lesssim g (x) \lesssim f(x)$ then we write $f \asymp g$. 
We use the bracket $\eabs{x} = (1 + |x|^2)^{\frac12}$ for $x \in \rr d$. 
Peetre's inequality with optimal constant \cite[Lemma~2.1]{Rodino3} is
\begin{equation}\label{eq:Peetre}
\eabs{x+y}^s \leqs \left( \frac{2}{\sqrt{3}} \right)^{|s|} \eabs{x}^s\eabs{y}^{|s|}\qquad x,y \in \rr d, \quad s \in \ro. 
\end{equation}
The normalization of the Fourier transform is
\begin{equation*}
 \cF f (\xi )= \widehat f(\xi ) = (2\pi )^{-\frac d2} \int _{\rr
{d}} f(x)e^{-i\scal  x\xi } \, \dd x, \qquad \xi \in \rr d, 
\end{equation*}
for $f\in \cS(\rr d)$ (the Schwartz space), where $\scal \cdo \cdo$ denotes the scalar product on $\rr d$. 
The conjugate linear action of a distribution $u$ on a test function $\phi$ is written $(u,\phi)$, consistent with the $L^2$ inner product $(\cdo ,\cdo ) = (\cdo ,\cdo )_{L^2}$ which is conjugate linear in the second argument. 

Denote translation by $T_x f(y) = f( y-x )$ and modulation by $M_\xi f(y) = e^{i \scal y \xi} f(y)$ 
for $x,y,\xi \in \rr d$ where $f$ is a function or distribution defined on $\rr d$. 
The composed operator is denoted $\Pi(x,\xi) = M_\xi T_x$. 
Let $\fy \in \cS(\rr d) \setminus \{0\}$. 
The short-time Fourier transform (STFT) of a tempered distribution $u \in \cS'(\rr d)$ is defined by 
\begin{equation*}
V_\fy u (x,\xi) = (2\pi )^{-\frac d2} (u, M_\xi T_x \fy) = \cF (u T_x \overline \fy)(\xi), \quad x,\xi \in \rr d. 
\end{equation*}
Then $V_\fy u$ is smooth and polynomially bounded \cite[Theorem~11.2.3]{Grochenig1}, that is 
there exists $k \geqs 0$ such that 
\begin{equation}\label{eq:STFTtempered}
|V_\fy u (x,\xi)| \lesssim \eabs{(x,\xi)}^{k}, \quad (x,\xi) \in T^* \rr d.  
\end{equation}
We have $u \in \cS(\rr d)$ if and only if
\begin{equation}\label{eq:STFTschwartz}
|V_\fy u (x,\xi)| \lesssim \eabs{(x,\xi)}^{-N}, \quad (x,\xi) \in T^* \rr d, \quad \forall N \geqs 0.  
\end{equation}

The inverse transform is given by
\begin{equation}\label{eq:STFTinverse}
u = (2\pi )^{-\frac d2} \iint_{\rr {2d}} V_\fy u (x,\xi) M_\xi T_x \fy \, \dd x \, \dd \xi
\end{equation}
provided $\| \fy \|_{L^2} = 1$, with action under the integral understood, that is 
\begin{equation}\label{eq:moyal}
(u, f) = (V_\fy u, V_\fy f)_{L^2(\rr {2d})}
\end{equation}
for $u \in \cS'(\rr d)$ and $f \in \cS(\rr d)$, cf. \cite[Theorem~11.2.5]{Grochenig1}. 

By \cite[Corollary~11.2.6]{Grochenig1} 
the topology for $\cS (\rr d)$ can be defined by the collection of seminorms
\begin{equation}\label{eq:seminormsS}
\cS(\rr d) \ni \psi \mapsto \| \psi \|_n := \sup_{z \in \rr {2d}} \eabs{z}^n |V_\fy \psi (z)|, \quad n \in \no,
\end{equation}
for any $\fy \in \cS(\rr d) \setminus 0$.

%%%%%%%%%%%%%%%%%%%%%%%%%%
\subsection{$s$-conic subsets}\label{subsec:sconic}
%%%%%%%%%%%%%%%%%%%%%%%%%%

We will use subsets of $T^* \rr d \setminus 0$ that are $s$-conic, 
that is closed under the operation $T^* \rr d \setminus 0 \ni (x,\xi) \mapsto ( \lambda x, \lambda^s \xi)$
for all $\lambda > 0$. 

Let $s > 0$ be fixed. 
We need the following simplified version of a tool taken from \cite{Parenti1} and its references. 
Given $(x,\xi) \in \rr {2d} \setminus 0$ there is a unique $\lambda = \lambda(x,\xi) = \lambda_s (x,\xi) > 0$ such that 
\begin{equation*}
\lambda (x,\xi)^{-2} | x |^2 + \lambda (x,\xi)^{-2s} | \xi |^2 = 1. 
\end{equation*}
Then $(x,\xi) \in \sr {2d-1}$ if and only if $\lambda (x,\xi) = 1$. 
By the implicit function theorem the function $\lambda: \rr {2d} \setminus 0 \to \ro_+$ is smooth \cite{Krantz1}. 
We have \cite[Eq.~(3.1)]{Rodino4}
\begin{equation}\label{eq:homogeneous1}
\lambda_s ( \mu x, \mu^s \xi) = \mu \lambda_s (x,\xi), \quad (x,\xi) \in \rr {2d} \setminus 0, \quad \mu > 0. 
\end{equation}

The projection $\pi_s(x,\xi)$ of $(x,\xi) \in \rr {2d} \setminus 0$ along the curve $\ro_+ \ni \mu \mapsto (\mu x, \mu^s \xi)$ onto $\sr {2d-1}$ is defined as
\begin{equation}\label{eq:projection}
\pi_s(x,\xi) = \left( \lambda(x,\xi)^{-1} x, \lambda(x,\xi)^{-s} \xi \right), \quad (x,\xi) \in \rr {2d} \setminus 0. 
\end{equation}
Then $\pi_s(\mu x, \mu^s \xi) = \pi_s(x, \xi)$ does not depend on $\mu > 0$. 
The function $\pi_s: \rr {2d} \setminus 0 \to \sr {2d-1}$ is smooth since $\lambda \in C^\infty(\rr {2d} \setminus 0)$
and $\lambda(x,\xi) > 0$ for all $(x,\xi) \in \rr {2d} \setminus 0$. 

From \cite{Parenti1}, or by straightforward arguments, we have the bounds
\begin{equation}\label{eq:lambdaboundnonisotropic}
|x| + |\xi|^{\frac1s}
\lesssim \lambda(x,\xi) \lesssim |x| + |\xi|^{\frac1s}, \quad (x,\xi) \in \rr {2d} \setminus 0
\end{equation}
and
\begin{equation}\label{eq:lambdaboundisotropic}
\eabs{ (x,\xi) }^{\min \left( 1, \frac1s \right)}
\lesssim 1 + \lambda(x,\xi) 
\lesssim \eabs{(x,\xi)}^{\max \left( 1, \frac1s \right)}, \quad (x,\xi) \in \rr {2d} \setminus 0. 
\end{equation}

We will use two types of $s$-conic neighborhoods. 
The first type is defined as follows. 

\begin{defn}\label{def:scone1}
Suppose $s, \ep > 0$ and $z_0 \in \sr {2d-1}$. 
Then
\begin{equation*}
\Gamma_{s, z_0, \ep}
= \{ (x,\xi) \in \rr {2d} \setminus 0, \ | z_0 - \pi_s(x,\xi) | < \ep \}
\subseteq  T^* \rr d \setminus 0. 
\end{equation*}
\end{defn}

We write $\Gamma_{z_0, \ep} = \Gamma_{s, z_0, \ep}$ when $s$ is fixed and understood from the context. 
If $\ep > 2$ then $\Gamma_{z_0, \ep} = T^* \rr d \setminus 0$ so we usually restrict to $\ep \leqs 2$. 

The second type of $s$-conic neighborhood is defined as follows. 

\begin{defn}\label{def:scone2}
Suppose $s, \ep > 0$ and $(x_0, \xi_0) \in \sr {2d-1}$. 
Then
\begin{align*}
\wt \Gamma_{(x_0,\xi_0),\ep}
& = \wt \Gamma_{s,(x_0,\xi_0),\ep} \\
& = \{ (y,\eta) \in \rr {2d} \setminus 0: \ (y,\eta) = (\lambda (x_0 + x), \lambda^s (\xi_0 + \xi), \ \lambda > 0, \ (x,\xi) \in \rB_\ep \} \\
& = \{ (y,\eta) \in \rr {2d} \setminus 0: \ \exists \lambda > 0: \ (\lambda y, \lambda^s \eta) \in (x_0,\xi_0) + \rB_\ep \}.  
\end{align*}
\end{defn}

By \cite[Lemma~3.7]{Rodino4} the two types of $s$-conic neighborhoods are topologically equivalent. 
This means that if $z_0 \in \sr {2d-1}$ then
for each $\ep > 0$ there exists $\delta > 0$ such that 
$\Gamma_{z_0, \delta} \subseteq \widetilde \Gamma_{z_0,\ep}$
and
$\wt \Gamma_{z_0, \delta} \subseteq \Gamma_{z_0,\ep}$.

%%%%%%%%%%%%%%%%%%%%%%%%%%%%%%%%%%%%%
\subsection{Pseudodifferential operators and anisotropic Shubin symbols}
%%%%%%%%%%%%%%%%%%%%%%%%%%%%%%%%%%%%%

We need some elements from the calculus of pseudodifferential operators \cite{Folland1,Hormander0,Nicola1,Shubin1}. 
Let $a \in C^\infty (\rr {2d})$, $m \in \ro$ and $0 \leqs \rho \leqs 1$. Then $a$ is a \emph{Shubin symbol} of order $m$ and parameter $\rho$, denoted $a\in G_\rho^m$, if for all $\alpha,\beta \in \nn d$ there exists a constant $C_{\alpha,\beta}>0$ such that
\begin{equation}\label{eq:shubinineq}
|\partial_x^\alpha \partial_\xi^\beta a(x,\xi)| \leqs C_{\alpha,\beta} \langle (x,\xi)\rangle^{m - \rho|\alpha + \beta|}, \quad x,\xi \in \rr d.
\end{equation}
The Shubin symbols $G_\rho^m$ form a Fr\'echet space where the seminorms are given by the smallest possible constants in \eqref{eq:shubinineq}.
We write $G_1^m = G^m$. 

For $a \in G_\rho^m$ and $t \in \ro$ a pseudodifferential operator in the $t$-quantization is defined by
\begin{equation}\label{eq:tquantization}
a_t(x,D) f(x)
= (2\pi)^{-d}  \int_{\rr {2d}} e^{i \langle x-y, \xi \rangle} a ( (1-t) x + t y,\xi ) \, f(y) \, \dd y \, \dd \xi, \quad f \in \cS(\rr d),
\end{equation}
when $m<-d$. The definition extends to $m \in \ro$ if the integral is viewed as an oscillatory integral.
If $t=0$ we get the Kohn--Nirenberg quantization $a_0(x,D)$ and if $t = \frac12$ we get the Weyl quantization $a_{1/2}(x,D) = a^w(x,D)$. 
The Weyl product is the product of symbols corresponding to operator composition (when well defined): 
$( a \wpr b)^w(x,D) = a^w(x,D) b^w (x,D)$.

Anisotropic versions of the Shubin classes are defined as follows 
\cite[Definition~3.1]{Rodino4}. 

\begin{defn}\label{def:symbol}
Let $s > 0$ and $m \in \ro$. 
The space of ($s$-)anisotropic Shubin symbols $G^{m,s}$ of order $m$ consists of functions $a \in C^\infty(\rr {2d})$ 
that satisfy the estimates
\begin{equation*}
|\pdd x \alpha \pdd \xi \beta a(x,\xi)|
\lesssim ( 1 + |x| + |\xi|^{\frac1s} )^{m - |\alpha| - s |\beta|}, \quad (x,\xi) \in T^* \rr d, \quad \alpha, \beta \in \nn d. 
\end{equation*}
\end{defn}

We have
\begin{equation*}
\bigcap_{m \in \ro} G^{m,s} = \cS(\rr {2d}),  
\end{equation*}
and $G^{m,1} = G^m = G_1^m$, that is the usual Shubin class, 
but we cannot embed $G_\rho^m$ in a space $G^{n,s}$ unless $\rho = s = 1$. 
Using \eqref{eq:lambdaboundnonisotropic} and \eqref{eq:lambdaboundisotropic} the embedding 
\begin{equation}\label{eq:Gmsinclusion}
G^{m,s} \subseteq G_\rho^{m_0}, 
\end{equation}
where $m_0 = \max(m, m/s)$ and $\rho = \min(s, 1/s)$, can be confirmed. 
Thus the Shubin calculus \cite{Shubin1,Nicola1} applies to the anisotropic Shubin symbols. 
But there is a more subtle anisotropic subcalculus adapted to the anisotropic Shubin symbols $G^{m,s}$, for each fixed $s > 0$.
In fact by \cite[Proposition~3.3]{Rodino4}
the symbol classes $G^{m,s}$ are invariant under a change of the quantization parameter $t \in \ro$ in \eqref{eq:tquantization}, 
and the Weyl product $\wpr: G^{m,s} \times G^{n,s} \to G^{m+n,s}$ is continuous.

The following two definitions are taken from \cite[Definitions~3.8 and 6.1]{Rodino4}. 
The anisotropic weight is denoted 
\begin{equation*}
\mu_s(x,\xi) = 1 + |x| + |\xi|^{\frac1s}. 
\end{equation*}

\begin{defn}\label{def:noncharacteristic}
Let $s > 0$, $z_0 \in \rr {2d} \setminus 0$, and $a \in G^{m,s}$. 
Then $z_0$ is called non-characteristic of order $m_1 \leqs m$, $z_0 \notin \charac_{s,m_1} (a)$, if there exists $\ep > 0$ such that, 
with $\Gamma = \Gamma_{s,\pi_s(z_0),\ep}$,
\begin{align}
|a( x, \xi )| & \geqs C \mu_s(x,\xi)^{m_1}, \quad (x,\xi) \in \Gamma \quad, \quad |x| + |\xi|^{\frac1s} \geqs R, \label{eq:lowerbound1} \\
|\pdd x \alpha \pdd \xi \beta a(x,\xi)| &\lesssim |a(x,\xi)| \mu_s(x,\xi)^{- |\alpha| - s |\beta|}, \quad \alpha, \beta \in \nn d, \quad (x,\xi) \in \Gamma, \quad |x| + |\xi|^{\frac1s} \geqs R, \label{eq:boundderivative1}
\end{align}
for suitable $C, R > 0$. 
\end{defn}

If $m_1 = m$ we write $\charac_{s,m} (a) = \charac_{s} (a)$, and then the condition \eqref{eq:boundderivative1} is redundant. 
Note that $\charac_{s,m_1} (a)$ is a closed $s$-conic subset of $T^* \rr d \setminus 0$, 
and $\charac_{s,m_1} (a) \subseteq \charac_{s,m_2} (a)$ if $m_1 \leqs m_2 \leqs m$. 

\begin{defn}\label{def:csupp}
Suppose $s > 0$, $a \in G^{m,s}$ and let $\pi_s$ be the projection \eqref{eq:projection}. 
The $s$-conical support $\csupp_{s} (a) \subseteq T^* \rr d \setminus 0$ of $a$ is
defined as follows. 
A point $z_0 \in T^* \rr d \setminus 0$ satisfies $z_0 \notin \csupp_{s} (a)$
if there exists $\ep > 0$ such that 
\begin{align*}
& \, \supp (a) \cap \overline{ \{ z \in \rr {2d} \setminus 0, \ | \pi_s(z) - \pi_s(z_0) | < \ep \} } \\
= & \, \supp (a) \cap \overline{ \Gamma }_{\pi_s (z_0), \ep} 
\quad \mbox{is compact in} \quad \rr {2d}. 
\end{align*}
\end{defn}

Clearly $\csupp_{s} (a) \subseteq T^* \rr d \setminus 0$ is $s$-conic. 
Note that for any $a \in G^{m,s}$ and any $m_1 \leqs m$ we have 
\begin{equation*}
\csupp_{s} (a) \cup \charac_{s,m_1} (a) = T^* \rr d \setminus 0. 
\end{equation*}
%

%%%%%%%%%%%%%%%%%%%%%%%%%%%%%%%%%%%%%%
\section{Anisotropic Gabor wave front sets}\label{sec:anisotropicgaborWF}
%%%%%%%%%%%%%%%%%%%%%%%%%%%%%%%%%%%%%%

The following definition is inspired by H.~Zhu's \cite[Definition~1.3]{Zhu1} of a quasi-homogen-eous 
wave front set defined by two non-negative parameters. 
Zhu uses a semiclassical formulation whereas we use the STFT. 
As far as we know it is an open question to determine if the concepts coincide.

Given a parameter $s > 0$ 
we define the $s$-Gabor wave front set $\WF_{\rm g}^{s} ( u ) \subseteq T^* \rr d \setminus 0$ of $u \in \cS'(\rr d)$. 

\begin{defn}\label{def:WFgs} 
Suppose $u \in \cS'(\rr d)$, $\fy \in \cS(\rr d) \setminus 0$, and $s > 0$. 
A point $z_0 = (x_0,\xi_0) \in T^* \rr d \setminus 0$ satisfies $z_0 \notin \WFgs ( u )$
if there exists an open set $U \subseteq T^* \rr d$ such that $z_0 \in U$ and 
\begin{equation}\label{eq:WFgs1}
\sup_{(x,\xi) \in U, \ \lambda > 0} \lambda^N |V_\fy u (\lambda x, \lambda^s \xi)| < + \infty \quad \forall N \geqs 0. 
\end{equation}
\end{defn}

If $s = 1$ we have $\WF_{\rm g}^{1} ( u ) = \WFg (u)$ 
which denotes the usual Gabor wave front set \cite{Hormander1,Rodino2}. 
We call $\WFgs ( u )$ the $s$-Gabor wave front set or the anisotropic Gabor wave front set. 
It is clear that $\WFgs ( u )$ is $s$-conic. 
In Definition \ref{def:WFgs} we may therefore assume that $(x_0,\xi_0) \in \sr {2d-1}$.

Referring to \eqref{eq:STFTtempered} and \eqref{eq:STFTschwartz} we see
that $\WF_{\rm g}^s ( u )$ records curves 
$0 < \lambda \mapsto (\lambda x, \lambda^s \xi)$ where $V_\fy u$ does not behave like the STFT of a Schwartz function. 
We have $\WFgs ( u ) = \emptyset$ if and only if $u \in \cS (\rr d)$ \cite[Section~4]{Rodino4}. 

If $s > 0$ then \eqref{eq:lambdaboundnonisotropic} and \eqref{eq:lambdaboundisotropic} give the bounds
\begin{equation}\label{eq:lambdaboundisotropic2}
\eabs{ (x,\xi) }^{\min \left( 1, \frac1s \right)}
\lesssim 1 + |x| + |\xi|^{\frac1s}
\lesssim \eabs{(x,\xi)}^{\max \left( 1, \frac1s \right)}, \quad (x,\xi) \in \rr {2d} \setminus 0. 
\end{equation}

If $(y,\eta) \in \wt \Gamma_{(x_0,\xi_0), \ep}$ for $0 < \ep < 1$ then for some $\lambda > 0$
and $(x,\xi) \in \rB_\ep$ we have
$(y,\eta) = (\lambda (x_0+x), \lambda^s (\xi_0+\xi))$. 
Thus $|y| + |\eta|^{\frac1s} \asymp \lambda$, so combining with \eqref{eq:lambdaboundisotropic2}
we obtain the following equivalent criterion to the condition \eqref{eq:WFgs1} in Definition \ref{def:WFgs}. 
The point $(x_0,\xi_0) \in \sr {2d-1}$ satisfies $(x_0,\xi_0) \notin \WFgs (u)$ if and only if for some $\ep > 0$
we have
\begin{equation}\label{eq:WFgs2}
\sup_{(x,\xi) \in \wt \Gamma_{(x_0,\xi_0), \ep}} \eabs{(x,\xi)}^N |V_\fy u (x,\xi)| < + \infty \quad \forall N \geqs 0. 
\end{equation}

We will need the following result on the anisotropic Gabor wave front set of a tensor product. 
The corresponding result for the Gabor wave front set is \cite[Proposition~2.8]{Hormander1}.
Here we use the notation $x=(x',x'') \in \rr {m+n}$, $x' \in \rr m$, $x'' \in \rr n$. 

\begin{prop}\label{prop:tensorWFs}
If $s > 0$, $u \in \cS'(\rr m)$, and $v \in \cS'(\rr n)$ then 
\begin{align*}
& \WFgs (u \otimes v) \subseteq \left( ( \WFgs(u) \cup \{0\} ) \times ( \WFgs(v) \cup \{0\} ) \right)\setminus 0 \\
& = \{ (x,\xi) \in T^* \rr {m+n} \setminus 0: \ (x',\xi') \in \WFgs(u) \cup \{ 0 \}, \ (x'',\xi'') \in \WFgs(v) \cup \{ 0 \} \} \setminus 0. 
\end{align*}
\end{prop}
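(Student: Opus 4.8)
The plan is to reduce everything to the behaviour of the short-time Fourier transform on a tensor product, using that $\WFgs$ may be computed with any window in $\cS \setminus 0$ (window independence, established in \cite{Rodino4}). I would fix windows $\fy \in \cS(\rr m) \setminus 0$ and $\psi \in \cS(\rr n) \setminus 0$ and work with the tensor window $\Phi = \fy \otimes \psi \in \cS(\rr {m+n}) \setminus 0$. The crucial identity is the factorization
\[
V_\Phi (u \otimes v)(x,\xi) = V_\fy u (x',\xi') \, V_\psi v (x'',\xi''), \quad x = (x',x''), \ \xi = (\xi',\xi''),
\]
which follows from $M_\xi T_x \Phi = (M_{\xi'} T_{x'} \fy) \otimes (M_{\xi''} T_{x''} \psi)$ together with $(u \otimes v, \phi_1 \otimes \phi_2) = (u,\phi_1)(v,\phi_2)$ and the conjugate-linear convention for the STFT.

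I would argue by contraposition. Let $z_0 = (x_0,\xi_0) \in T^* \rr {m+n} \setminus 0$ lie outside the right-hand side; then $z_0 \neq 0$ and, by the description of that set, at least one of the projected points $(x_0',\xi_0')$ and $(x_0'',\xi_0'')$ fails to lie in the corresponding set $\WFgs(\cdot) \cup \{0\}$. Assume without loss of generality that $(x_0',\xi_0') \notin \WFgs(u) \cup \{0\}$, so that $(x_0',\xi_0') \neq 0$ and $(x_0',\xi_0') \notin \WFgs(u)$. By Definition \ref{def:WFgs} there is an open set $U \ni (x_0',\xi_0')$ in $T^* \rr m$ on which $V_\fy u$ decays superpolynomially along the anisotropic curves, uniformly in the base point. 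I would then take $W$ to be a sufficiently small open ball around $z_0$ in $T^* \rr {m+n}$, small enough that the $(x',\xi')$-component of every point of $W$ lies in $U$ while the $(x'',\xi'')$-component stays in a fixed bounded set.

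It remains to bound $\lambda^N |V_\Phi(u \otimes v)(\lambda x, \lambda^s \xi)|$ on $W$. Writing $(\lambda x, \lambda^s \xi) = ((\lambda x', \lambda x''), (\lambda^s \xi', \lambda^s \xi''))$ and using the factorization, the first factor $|V_\fy u(\lambda x', \lambda^s \xi')|$ is $O(\lambda^{-M})$ for every $M$ by the choice of $U$, while the second factor $|V_\psi v(\lambda x'', \lambda^s \xi'')|$ is controlled by \eqref{eq:STFTtempered}. The point to check is that this second factor grows at most polynomially in $\lambda$: since $(x'',\xi'')$ ranges over a bounded set, the bound \eqref{eq:lambdaboundisotropic2} gives $\eabs{(\lambda x'', \lambda^s \xi'')} \lesssim (1 + \lambda)^{\max(1,s)}$, hence $|V_\psi v(\lambda x'', \lambda^s \xi'')| \lesssim (1+\lambda)^{k \max(1,s)}$ for some $k \geqs 0$. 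Multiplying the two estimates and absorbing the polynomial factor into the superpolynomial decay shows that \eqref{eq:WFgs1} holds on $W$, so $z_0 \notin \WFgs(u \otimes v)$. The symmetric case $(x_0'',\xi_0'') \notin \WFgs(v) \cup \{0\}$ is handled identically. I expect the only delicate step to be the anisotropic bookkeeping in this last estimate — keeping the scaling $\lambda \mapsto (\lambda x, \lambda^s \xi)$ consistent across both factors and confirming via \eqref{eq:lambdaboundisotropic2} that boundedness of $(x'',\xi'')$ really does force only polynomial growth of the tempered factor in $\lambda$; everything else is routine once the factorization and window independence are in place.
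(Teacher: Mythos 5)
Your proposal is correct and takes essentially the same route as the paper's own proof: contraposition, the factorization $V_{\fy\otimes\psi}(u\otimes v)(x,\xi)=V_\fy u(x',\xi')\,V_\psi v(x'',\xi'')$ for a tensor window, superpolynomial decay (uniform on a small neighborhood) of the factor whose projected point lies outside $\WFgs(u)\cup\{0\}$, and polynomial control of the other factor via \eqref{eq:STFTtempered} together with $\eabs{(\lambda x'',\lambda^s\xi'')}\lesssim \lambda^{\max(1,s)}$ for bounded $(x'',\xi'')$ and $\lambda\geqs 1$. The only cosmetic difference is that the paper bounds $\eabs{(\lambda x'',\lambda^s\xi'')}^k$ by a direct computation rather than citing \eqref{eq:lambdaboundisotropic2}, and it absorbs the polynomial growth into the arbitrary exponent $N$ exactly as you do.
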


\begin{proof}
Let $\fy \in \cS(\rr m) \setminus 0$ and $\psi \in \cS(\rr n) \setminus 0$. 
Suppose $(x_0,\xi_0) \in T^* \rr {m+n} \setminus 0$ does not belong to the set on the right hand side. 
Then either $(x_0',\xi_0') \notin \WFgs(u) \cup \{ 0 \}$ or $(x_0'',\xi_0'') \notin \WFgs(v) \cup \{ 0 \}$. 
For reasons of symmetry we may assume $(x_0',\xi_0') \notin \WFgs(u) \cup \{ 0 \}$. 

Thus there exists $\ep > 0$ such that 
\begin{equation*}
\sup_{(x',\xi') \in (x_0',\xi_0') + \rB_\ep, \ \lambda > 0} \lambda^N |V_\varphi u( \lambda x', \lambda^s \xi')| < \infty \quad \forall N \geqs 0. 
\end{equation*}

Let $(x',\xi') \in (x_0',\xi_0') + \rB_\ep$, $(x'',\xi'') \in (x_0'',\xi_0'') + \rB_\ep$, 
let $N \in \no$ be arbitrary, and let $\lambda \geqs 1$. 
We obtain using \eqref{eq:STFTtempered}, for some $k \in \no$
\begin{align*}
\lambda^N |V_{\fy \otimes \psi} u \otimes v (\lambda x, \lambda^s \xi)|
& = \lambda^N |V_\fy u ( \lambda x', \lambda^s \xi')| \, |V_\psi v (\lambda x'', \lambda^s \xi'')| \\
& \lesssim \lambda^N  |V_\fy u ( \lambda x', \lambda^s \xi')| \, \eabs{ (\lambda x'', \lambda^s \xi'' )}^k \\
& \leqs \lambda^{N+ k \max(1,s) } 
\left( 1 + ( |(x_0'', \xi_0'')| + \ep)^2  | \right)^{\frac{k}{2}} |V_\fy u ( \lambda x', \lambda^s \xi')|  \\
& \lesssim \lambda^{N+ k \max(1,s) } |V_\fy u ( \lambda x', \lambda^s \xi')|  < \infty. 
\end{align*}
It follows that $(x_0,\xi_0) \notin \WFgs (u \otimes v)$. 
\end{proof}

For the next result we need the following lemma to construct functions in $a \in G^{m,s}$
such that $\charac_{s,m} (a) = \emptyset$. 

\begin{lem}\label{lem:Gmsnonchar}
If $s > 0$ and $m \in \ro$ then there exists $a \in G^{m,s}$
such that $\charac_{s,m} (a) = \emptyset$. 
\end{lem}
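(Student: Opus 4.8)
The plan is to build $a$ from the anisotropic distance function $\lambda=\lambda_s$ of \secref{subsec:sconic}: away from the origin I would set $a$ equal to a power of $\lambda$, and near the origin I would glue it smoothly to a constant. The whole point is that $\lambda$ plays the role of an elliptic anisotropic symbol of order $1$, so $\lambda^m$ is an elliptic symbol of order $m$ and therefore has empty characteristic set.

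The first step is to record symbol estimates for $\lambda$ itself. Differentiating the homogeneity relation \eqref{eq:homogeneous1} once in $x_j$ gives $(\partial_{x_j}\lambda)(\mu x,\mu^s\xi)=\partial_{x_j}\lambda(x,\xi)$ and once in $\xi_j$ gives $(\partial_{\xi_j}\lambda)(\mu x,\mu^s\xi)=\mu^{1-s}\partial_{\xi_j}\lambda(x,\xi)$, so that inductively $\pdd x \alpha \pdd \xi \beta \lambda$ is anisotropically homogeneous of degree $1-|\alpha|-s|\beta|$. Evaluating at $(x,\xi)\neq 0$ by writing it, via \eqref{eq:projection}, as the image of $\pi_s(x,\xi)\in\sr{2d-1}$ under the scaling with parameter $\mu=\lambda(x,\xi)$, and using that $\pdd x \alpha \pdd \xi \beta \lambda$ is smooth hence bounded on the compact sphere, I obtain $|\pdd x \alpha \pdd \xi \beta \lambda(x,\xi)|\lesssim \lambda(x,\xi)^{1-|\alpha|-s|\beta|}$ on $\rr{2d}\setminus 0$. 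A Fa\`a di Bruno (chain-rule) expansion of $\lambda^m$ then yields $|\pdd x \alpha \pdd \xi \beta \lambda^m|\lesssim \lambda^{m-|\alpha|-s|\beta|}$, and by \eqref{eq:lambdaboundnonisotropic} we have $\lambda\asymp |x|+|\xi|^{\frac1s}\asymp\mu_s$ on the set where $|x|+|\xi|^{\frac1s}\geqs 1$, so these estimates read $|\pdd x \alpha \pdd \xi \beta \lambda^m|\lesssim \mu_s^{m-|\alpha|-s|\beta|}$ there.

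Next I would remove the singularity at the origin. Fix $\chi\in C_c^\infty(\rr{2d})$ with $\chi=1$ on $\rB_1$ and $\supp\chi\subseteq\rB_2$, and set $a=\chi+(1-\chi)\lambda^m$. On $\rB_1$ we have $a\equiv 1$, on $\rr{2d}\setminus\rB_2$ we have $a=\lambda^m$, and the transition annulus $\rB_2\setminus\rB_1$ is compact and bounded away from the origin, where $\lambda$, $\chi$ and all their derivatives are bounded. Combining the three regions with $\mu_s\asymp 1$ on $\rB_2$ gives $|\pdd x \alpha \pdd \xi \beta a|\lesssim \mu_s^{m-|\alpha|-s|\beta|}$ globally, that is $a\in G^{m,s}$. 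Since $\lambda>0$ is bounded below on the annulus and $\lambda\asymp\mu_s$ outside $\rB_2$, the same case analysis yields $a\asymp\mu_s^m$, in particular a lower bound $|a(x,\xi)|\geqs C\mu_s(x,\xi)^m$ holding globally and in every direction. Thus \eqref{eq:lowerbound1} is satisfied for every $z_0\in T^*\rr d\setminus 0$ (with \eqref{eq:boundderivative1} redundant because $m_1=m$), so no point is characteristic and $\charac_{s,m}(a)=\emptyset$.

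I expect the derivative estimates for $\lambda$ to be the main obstacle: the passage from the single homogeneity identity \eqref{eq:homogeneous1} to the full family of weighted bounds on $\pdd x \alpha \pdd \xi \beta \lambda$, and their propagation through the power $m$ and the cutoff, is where the real work lies. The ellipticity lower bound and the emptiness of the characteristic set are then essentially immediate from the two-sided comparison $a\asymp\mu_s^m$.
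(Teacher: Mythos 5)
Your proof is correct and takes essentially the same route as the paper: there one sets $\psi = \lambda_s^m$, uses the anisotropic homogeneity of its derivatives together with their boundedness on the compact sphere $\sr{2d-1}$ to obtain the $G^{m,s}$ estimates away from the origin, glues in a cutoff near $0$, and reads off $\charac_{s,m}(a) = \emptyset$ from $|a| \asymp \mu_s^m$ at infinity. The only cosmetic differences are that the paper differentiates the homogeneity relation for $\lambda_s^m$ directly (so no Fa\`a di Bruno step is needed) and uses the multiplicative cutoff $g(|z|)\lambda_s^m(z)$ rather than your convex-combination glue $\chi + (1-\chi)\lambda_s^m$.
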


\begin{proof}
Let $g \in C^\infty(\ro)$ satisfy $0 \leqs g \leqs 1$, $g(x) = 0$ if $x \leqs \frac12$ and $g(x) = 1$ if $x \geqs 1$.  
Set 
\begin{equation}\label{eq:homogeneous2}
\psi (\lambda x, \lambda^s \xi) = \lambda^m, \quad (x,\xi) \in \sr {2d-1}, \quad \lambda > 0, 
\end{equation}
and 
\begin{equation}\label{eq:cutoff1}
a (z) = g ( |z| ) \psi (z), \quad z \in \rr {2d}. 
\end{equation}

Note that \eqref{eq:homogeneous2} can be written
\begin{equation*}
\psi (x,\xi) = \lambda_s^m(x,\xi), \quad (x,\xi) \in \rr {2d} \setminus 0, 
\end{equation*}
and it follows that $\psi \in C^\infty( \rr {2d} \setminus 0 )$, 
and thus $a \in C^\infty (\rr {2d})$. 

If $(x,\xi) \in \rr {2d} \setminus 0$ and $\lambda > 0$ then by 
\eqref{eq:homogeneous1}
\begin{equation*}
\psi (\lambda x, \lambda^s \xi) = \lambda_s^m( \lambda x, \lambda^s \xi )
= \lambda^m  \psi( x, \xi ). 
\end{equation*}
This gives 
\begin{equation}\label{eq:homogeneous3}
(\pdd x \alpha \pdd \xi \beta \psi) (\lambda x, \lambda^s \xi)
= \lambda^{m-|\alpha| - s |\beta|}  \pdd x \alpha \pdd \xi \beta \psi( x, \xi ), \quad (x,\xi) \in \rr {2d} \setminus 0, 
\quad \lambda > 0, \quad \alpha, \beta \in \nn d. 
\end{equation}

Let $(y,\eta) \in \rr {2d} \setminus \rB_1$.
Then $(y,\eta) = (\lambda x, \lambda^s \xi)$ for a unique $(x,\xi) \in \sr {2d-1}$ and 
$\lambda = \lambda_s (y,\eta) \geqs 1$.
Combining
\begin{equation*}
1 + |y| + |\eta|^{\frac1s} = 1 + \lambda ( |x| + |\xi|^{\frac1s} ) \asymp 1+ \lambda
\end{equation*}
with \eqref{eq:homogeneous3} we obtain for any $\alpha, \beta \in \nn d$
\begin{equation*}
\left| \pdd y \alpha \pdd \eta \beta \psi (y, \eta) \right|
\leqs C_{\alpha,\beta} (1+\lambda)^{m -|\alpha| - s |\beta|} 
\lesssim ( 1 + |y| + |\eta|^{\frac1s} )^{m -|\alpha| - s |\beta|}. 
\end{equation*}
Referring to \eqref{eq:cutoff1} we may conclude that $a \in G^{m,s}$. 

For the same reason we have 
\begin{equation*}
\left| a (y, \eta) \right|
= \lambda^m 
\asymp ( 1 + |y| + |\eta|^{\frac1s} )^m, \quad |(y,\eta)| \geqs 1,  
\end{equation*}
which shows that $\charac_{s,m} (a) = \emptyset$. 
\end{proof}

\begin{rem}\label{rem:correction}
The proof of Lemma \ref{lem:Gmsnonchar} gives a correction of the 
slightly erroneous argument in the proof of \cite[Lemma~3.5]{Rodino4}. 
More precisely  \cite[Eq.~(3.16)]{Rodino4} is not well motivated. 
But the conclusion $\chi \in G^{0,s}$ follows from a homogeneity argument as above. 
\end{rem}

The following result generalizes \cite[Definitions~2.6 and 3.1 combined with Theorems~4.1 and 4.2]{Rodino2}, 
and is a characterization of the $s$-Gabor wave front set which is conceptually similar to characterizations of 
other types of wave front sets \cite{Hormander0}. 

\begin{prop}\label{prop:WFcharacterization}
If $s > 0$, $m \in \ro$ 
and $u \in \cS'(\rr d)$ then 
\begin{equation*}
\WFgs (u) = \bigcap_{a \in G^{m,s}: \ a^w(x,D) u \in \cS} \charac_{s,m} (a). 
\end{equation*}
\end{prop}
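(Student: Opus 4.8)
The plan is to prove the two inclusions separately. Write $R := \bigcap \charac_{s,m}(a)$ for the right-hand side, the intersection running over all $a \in G^{m,s}$ with $a^w(x,D) u \in \cS(\rr d)$. Since $\WFgs(u)$ and each $\charac_{s,m}(a)$ are closed $s$-conic subsets of $T^* \rr d \setminus 0$, so is $R$, and we may test points $z_0 \in \sr{2d-1}$. Unravelling the definitions, the inclusion $\WFgs(u) \subseteq R$ is equivalent to the \emph{microlocal elliptic regularity} statement: whenever $a^w(x,D) u \in \cS$ and $z_0 \notin \charac_{s,m}(a)$, then $z_0 \notin \WFgs(u)$. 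The reverse inclusion $R \subseteq \WFgs(u)$ is equivalent to: if $z_0 \notin \WFgs(u)$, then there is a \emph{single} symbol $a \in G^{m,s}$ with $a^w u \in \cS$ and $z_0 \notin \charac_{s,m}(a)$, which removes $z_0$ from the intersection.

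For the inclusion $R \subseteq \WFgs(u)$ I would first produce a localizing symbol. Fix $z_0 = (x_0,\xi_0) \in \sr{2d-1}$ with $z_0 \notin \WFgs(u)$ and choose $\ep > 0$ so small that, by \eqref{eq:WFgs2}, no point of the $s$-cone $\widetilde \Gamma_{z_0, 2\ep}$ lies in $\WFgs(u)$. Build a cutoff $\chi \in G^{0,s}$ which is $s$-homogeneous of degree $0$ outside a ball (exactly as in Lemma \ref{lem:Gmsnonchar}, with $\lambda_s^m$ replaced by $\lambda_s^0 \equiv 1$ times an angular factor), equal to $1$ on $\Gamma_{s,z_0,\ep}$ and with $\csupp_s(\chi) \subseteq \Gamma_{s,z_0,2\ep}$. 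Multiplying by the everywhere-elliptic symbol $b \in G^{m,s}$ of Lemma \ref{lem:Gmsnonchar} and forming the Weyl product $a := \chi \wpr b \in G^{m,s}$, continuous by \cite[Proposition~3.3]{Rodino4}, one has $\csupp_s(a) \subseteq \csupp_s(\chi)$, while $z_0 \notin \charac_{s,m}(a)$ because both factors are elliptic of the correct order near $z_0$. Since $\csupp_s(a) \cap \WFgs(u) = \emptyset$, the microlocality of the anisotropic calculus \cite{Rodino4}, in the form $\WFgs(a^w(x,D) u) \subseteq \WFgs(u) \cap \csupp_s(a)$, forces $\WFgs(a^w u) = \emptyset$, hence $a^w u \in \cS$. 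This exhibits the required $a$, so $z_0 \notin R$.

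For the inclusion $\WFgs(u) \subseteq R$, the main work is a microlocal parametrix near a noncharacteristic point. Given $a$ with $a^w u \in \cS$ and $z_0 \notin \charac_{s,m}(a)$, on an $s$-cone $\Gamma = \Gamma_{s,\pi_s(z_0),\ep}$ the lower bound \eqref{eq:lowerbound1} and the derivative estimates \eqref{eq:boundderivative1} let me set $b_0 = \chi/a$, with $\chi$ an angular cutoff elliptic at $z_0$, supported in $\Gamma$ and $s$-homogeneous of degree $0$ at infinity, and verify $b_0 \in G^{-m,s}$. Then $b_0 \wpr a = \chi_0 - r_1$ with a gain of one order in $\mu_s$, so $r_1 \in G^{-1,s}$ and $\csupp_s(r_1) \subseteq \Gamma$; iterating and summing an asymptotic Neumann-type series in the anisotropic Weyl calculus produces $b \in G^{-m,s}$ with $b \wpr a = \psi - r$, where $\psi \in G^{0,s}$ is elliptic at $z_0$ and $r \in \cS(\rr{2d})$. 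At the operator level $\psi^w u = b^w(a^w u) + r^w u$, and since $a^w u \in \cS$ with $b^w \colon \cS \to \cS$ while $r^w \colon \cS' \to \cS$ is smoothing, the right-hand side lies in $\cS$. Thus $\psi^w u \in \cS$, and microlocality together with the ellipticity of $\psi$ at $z_0$ gives $z_0 \notin \WFgs(u)$.

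The hard part is this last parametrix step: one must carry out the asymptotic summation and control the remainders entirely within the anisotropic Weyl subcalculus of \cite{Rodino4}, keeping the $s$-conic localization of $\csupp_s$ consistent through each application of $\wpr$, so that the order gain is measured in the anisotropic weight $\mu_s$ rather than in $\eabs{(x,\xi)}$. Once the anisotropic ellipticity and parametrix machinery is in place, both inclusions follow as above; the characterization is moreover independent of the order $m$, since Lemma \ref{lem:Gmsnonchar} supplies everywhere-elliptic symbols of every order.
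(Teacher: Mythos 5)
Your proposal mirrors the paper's own proof in both directions: the inclusion $R \subseteq \WFgs(u)$ (your notation) is obtained by multiplying the everywhere-noncharacteristic symbol of Lemma \ref{lem:Gmsnonchar} by an $s$-conic cutoff and invoking microlocality plus the conical-support bound from \cite{Rodino4}, and the inclusion $\WFgs(u) \subseteq R$ rests on a microlocal parametrix $b \wpr a = \chi - r$ with $r \in \cS(\rr {2d})$ --- which is exactly \cite[Lemma~6.3]{Rodino4}, cited by the paper as a black box --- followed by the decomposition $u = (1-\chi)^w(x,D)u + b^w(x,D)(a^w(x,D)u) + r^w(x,D)u$. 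So the architecture is the same; what the paper does by citation you propose to rebuild, which is legitimate.

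Two steps need repair, however. First, you set $a := \chi \wpr b$ and assert $\csupp_s(a) \subseteq \csupp_s(\chi)$. That is false in general: the Weyl product does not preserve supports. The terms of its asymptotic expansion are supported in $\supp \chi$, but the remainder is only a Schwartz function, and by Definition \ref{def:csupp} (which is phrased via exact supports) a Schwartz function can have $s$-conical support equal to all of $T^*\rr d \setminus 0$ (e.g. $e^{-|z|^2}$). The paper sidesteps this by taking the \emph{pointwise} product $a = b\chi$, for which the support inclusion is trivial and noncharacteristicity at $z_0$ is immediate because $\chi \equiv 1$ on $\Gamma_{z_0,\ep}$ outside a ball. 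If you prefer to keep $a = \chi \wpr b$, argue at the operator level instead: $(\chi \wpr b)^w(x,D) = \chi^w(x,D)\, b^w(x,D)$, so $\WFgs(a^w(x,D)u) \subseteq \csupp_s(\chi) \cap \WFgs(b^w(x,D)u) \subseteq \csupp_s(\chi) \cap \WFgs(u) = \emptyset$, applying the microlocality and conical-support propositions of \cite{Rodino4} to each factor separately. Second, in the elliptic-regularity direction your closing inference --- that $\psi^w(x,D)u \in \cS$ together with ellipticity of $\psi$ at $z_0$ yields $z_0 \notin \WFgs(u)$ --- is, read literally, the very statement being proved (with $m=0$ and $a=\psi$), hence circular. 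It closes only by using the specific structure of $\psi$: since $\psi$ is a cutoff identically $1$ on an $s$-cone around $z_0$ outside a ball, write $u = (1-\psi)^w(x,D)u + \psi^w(x,D)u$, note the second term lies in $\cS$, and conclude $\WFgs(u) = \WFgs((1-\psi)^w(x,D)u) \subseteq \csupp_s(1-\psi)$, which avoids $\Gamma_{z_0,\ep}$. With these two corrections your argument coincides with the paper's.
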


\begin{proof}
First we show 
\begin{equation}\label{eq:WFcharac2}
\WFgs (u) \subseteq \bigcap_{a \in G^{m,s}: \ a^w(x,D) u \in \cS} \charac_{s,m} (a). 
\end{equation}

Suppose 
$a \in G^{m,s}$, $a^w(x,D) u \in \cS$, $z_0 \in T^* \rr d \setminus 0$ and $z_0 \notin \charac_{s,m} (a)$. 
We may assume $| z_0 | = 1$. 
Let $\ep > 0$ be small enough to guarantee $\Gamma_{z_0, 2\ep} \cap \charac_{s,m} (a) = \emptyset$. 
By \cite[Lemma~3.5]{Rodino4} there exists for any $\rho > 0$ an $s$-conic cutoff function $\chi \in G^{0,s}$ such that 
$0 \leqs \chi \leqs 1$,
$\supp \chi \subseteq \Gamma_{z_0, 2\ep} \setminus \rB_{\rho/2}$
and $\chi |_{\Gamma_{z_0, \ep} \setminus \overline \rB_{\rho} } \equiv 1$. 

If $\rho > 0$ is sufficiently large then by \cite[Lemma~6.3]{Rodino4} there exists $b \in G^{-m,s}$ and $r \in \cS(\rr {2d})$ such that 
\begin{equation*}
b \wpr a = \chi - r. 
\end{equation*}
Thus we may write 
\begin{equation*}
u = (1-\chi)^w(x,D) u + b^w(x,D) a^w(x,D) u + r^w(x,D) u 
\end{equation*}
where $r^w(x,D) u \in \cS$ since $r^w(x,D): \cS' \to \cS$ is regularizing,  
and $b^w(x,D) a^w(x,D) u \in \cS$ since $a^w(x,D) u \in \cS$  
and $b^w(x,D): \cS \to \cS$ is continuous \cite[Section~23.2]{Shubin1}. 
It follows that $\WFgs (u) = \WFgs ( (1-\chi)^w(x,D) u )$,
and finally \cite[Proposition~6.2]{Rodino4} yields
\begin{equation*}
\WFgs (u) = \WFgs ( (1-\chi)^w(x,D) u ) \subseteq \csupp_{s} ( 1-\chi ) \subseteq T^* \rr d \setminus \Gamma_{z_0, \ep}. 
\end{equation*}
It follows that $z_0 \notin \WFgs (u)$ so we have proved \eqref{eq:WFcharac2}. 

It remains to show 
\begin{equation}\label{eq:WFcharac3}
\WFgs (u) \supseteq \bigcap_{a \in G^{m,s}: \ a^w(x,D) u \in \cS} \charac_{s,m} (a). 
\end{equation}

Suppose $z_0 \in T^* \rr d \setminus 0$, $z_0 \notin \WFgs (u)$ and $| z_0 | = 1$. 
Let $\ep > 0$ be small enough to guarantee $\Gamma_{z_0, 2\ep} \cap \WFgs (u) = \emptyset$. 
Let $\rho > 0$ and let $\chi \in G^{0,s}$ satisfy
$0 \leqs \chi \leqs 1$,
$\supp \chi \subseteq \Gamma_{z_0, 2\ep} \setminus \rB_{\rho/2}$
and $\chi |_{\Gamma_{z_0, \ep} \setminus \overline \rB_{\rho} } \equiv 1$. 
Using Lemma \ref{lem:Gmsnonchar} we let $b \in G^{m,s}$ satisfy $\charac_{s,m} (b) = \emptyset$, 
and we set $a = b \chi \in G^{m,s}$. 
Then $z_0 \notin \charac_{s,m} ( a )$. 

We have $\csupp_{s} ( a ) \subseteq \Gamma_{z_0, 2\ep}$, and by the microlocal inclusion \cite[Proposition~5.1]{Rodino4}
we have $\WFgs ( a^w (x,D) u) \subseteq \WFgs (u)$. 
Combining with \cite[Proposition~6.2]{Rodino4} this implies 
\begin{equation*}
\WFgs ( a^w (x,D) u ) \subseteq \csupp_{s} ( a ) \cap \WFgs (u) 
\subseteq \Gamma_{z_0, 2\ep} \cap \WFgs (u) =  \emptyset. 
\end{equation*}
It follows that $a^w (x,D) u \in \cS$, which means that we have proved \eqref{eq:WFcharac3}. 
\end{proof}

%%%%%%%%%%%%%%%%%%%%%%%%%%%%%%%%%%%%%%%%%
\section{Propagation of anisotropic Gabor wave front sets}\label{sec:propagation}
%%%%%%%%%%%%%%%%%%%%%%%%%%%%%%%%%%%%%%%%%

Define for $K \in \cS'(\rr {2d})$
\begin{align*}
\WF_{\rm g,1}^s(K) & = \{ (x,\xi) \in T^* \rr d: \ (x, 0, \xi, 0) \in \WFgs (K) \} & \subseteq T^* \rr d \setminus 0, \\
\WF_{\rm g,2}^s(K) & = \{ (y,\eta) \in T^* \rr d: \ (0, y, 0, -\eta) \in \WFgs (K) \} & \subseteq T^* \rr d \setminus 0. 
\end{align*}

We will use the assumption
\begin{equation}\label{eq:WFKjempty}
\WF_{\rm g,1}^s (K) = \WF_{\rm g,2}^s (K) = \emptyset. 
\end{equation}  

We note that the condition \eqref{eq:WFKjempty} appears in several other works for 
various global isotropic \cite{Carypis1,Hormander1,PRW1,Wahlberg1} and anisotropic \cite{Wahlberg3} wave front sets. 
The following lemma is 
a version of \cite[Lemma~5.1]{Wahlberg3} for tempered distributions and the $s$-Gabor wave front set (cf. \cite[Lemma~6.1]{Carypis1}). 

\begin{lem}\label{lem:WFkernelaxes}
If $s > 0$, $K \in \cS'(\rr {2d})$ and \eqref{eq:WFKjempty} holds, then there exists $c > 1$ such that 
\begin{equation}\label{eq:Gamma1}
\WF_{\rm g}^s (K) \subseteq 
\Gamma_1 := \left\{ (x,y,\xi,\eta) \in T^* \rr {2d}: \ c^{-1} \left( |x| + |\xi|^{\frac1s} \right) <  |y| + |\eta|^{\frac1s}  < c \left( |x| + |\xi|^{\frac1s} \right)  \right\}. 
\end{equation}
\end{lem}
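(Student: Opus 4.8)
The statement asserts that the two hypotheses (the $s$-Gabor wave front set of $K$ avoiding the two coordinate-axis families) force the wave front set into a "double cone" $\Gamma_1$ around the anisotropic diagonal. I would argue by contradiction. Suppose no such $c > 1$ exists. Then for every $n \in \no$ there is a point $(x_n, y_n, \xi_n, \eta_n) \in \WFgs(K)$ violating one of the two inequalities in \eqref{eq:Gamma1}, say (after passing to a subsequence and relabelling) the left one, so that $|y_n| + |\eta_n|^{1/s} \leqs n^{-1} (|x_n| + |\xi_n|^{1/s})$. The idea is to extract from such a sequence a limit point that lies on one of the forbidden axes, contradicting \eqref{eq:WFKjempty}.

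\emph{Normalization and compactness.} Since $\WFgs(K)$ is $s$-conic (closed under $(x,y,\xi,\eta) \mapsto (\lambda x, \lambda y, \lambda^s \xi, \lambda^s \eta)$), I would first rescale each point by the appropriate $\lambda_n = \lambda_s(x_n,y_n,\xi_n,\eta_n)$ so that it lies on the sphere $\sr{4d-1}$, using \eqref{eq:homogeneous1}; the rescaled point stays in $\WFgs(K)$. By compactness of $\sr{4d-1}$ I extract a convergent subsequence with limit $z_\infty \in \sr{4d-1}$. Because $\WFgs(K)$ is closed, $z_\infty \in \WFgs(K)$ (in particular $z_\infty \neq 0$). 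The key point is that the violated inequality, being $s$-homogeneous of degree zero under the same scaling (each side of \eqref{eq:Gamma1} transforms by the factor $\lambda$, via \eqref{eq:lambdaboundnonisotropic}), is preserved: writing $z_\infty = (x_\infty, y_\infty, \xi_\infty, \eta_\infty)$, the bound $|y_n| + |\eta_n|^{1/s} \leqs n^{-1}(|x_n| + |\xi_n|^{1/s})$ passes to the limit and forces $|y_\infty| + |\eta_\infty|^{1/s} = 0$, i.e. $y_\infty = 0$ and $\eta_\infty = 0$. Thus $z_\infty = (x_\infty, 0, \xi_\infty, 0) \in \WFgs(K)$ with $(x_\infty,\xi_\infty) \neq 0$, which says precisely that $(x_\infty,\xi_\infty) \in \WF_{\rm g,1}^s(K)$, contradicting \eqref{eq:WFKjempty}. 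The symmetric case (the right inequality of \eqref{eq:Gamma1} violated infinitely often) produces in the same way a limit point of the form $(0, y_\infty, 0, -\eta_\infty)$, giving a nonzero element of $\WF_{\rm g,2}^s(K)$, again a contradiction.

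\emph{The main obstacle.} The delicate step is ensuring the two inequalities defining $\Gamma_1$ are genuinely invariant under the anisotropic scaling, so that normalization to the sphere does not distort them. Under $(x,y,\xi,\eta) \mapsto (\lambda x, \lambda y, \lambda^s \xi, \lambda^s \eta)$ one has $|\lambda x| = \lambda |x|$ and $|\lambda^s \xi|^{1/s} = \lambda |\xi|$, so each expression $|x| + |\xi|^{1/s}$ scales by exactly $\lambda$; hence the ratio $(|y| + |\eta|^{1/s})/(|x| + |\xi|^{1/s})$ is scale-invariant, which is what makes the argument go through cleanly. One must also handle the possibility that a denominator such as $|x_n| + |\xi_n|^{1/s}$ degenerates after normalization; but if this happened in the limit, then $x_\infty = \xi_\infty = 0$, and since $z_\infty \neq 0$ we would have $(0, y_\infty, 0, \eta_\infty) \in \WFgs(K)$, which upon the sign normalization $\eta \mapsto -\eta$ again lands in $\WF_{\rm g,2}^s(K)$ and contradicts \eqref{eq:WFKjempty}. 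So both degenerate limits are already excluded by hypothesis, and the constant $c > 1$ extracted from the contradiction gives the claimed two-sided bound.
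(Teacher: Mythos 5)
Your proof is correct and takes essentially the same route as the paper's: argue by contradiction, rescale the offending sequence onto $\sr{4d-1}$ using the $s$-conic invariance of $\WFgs(K)$ and the exact scaling $|\lambda x| + |\lambda^s \xi|^{1/s} = \lambda\left(|x|+|\xi|^{1/s}\right)$, then use compactness of the sphere and closedness of $\WFgs(K)$ to produce a limit point on a forbidden coordinate axis, contradicting \eqref{eq:WFKjempty}. The only cosmetic differences are that the paper proves the two one-sided inclusions separately while you combine them via a pigeonhole subsequence, and your closing remark about a degenerating denominator addresses a case that cannot occur once the points are normalized to the sphere (it would force the limit to be $0$), so it is superfluous but harmless.
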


\begin{proof}
Suppose that
\begin{equation*}
\WFgs (K) \subseteq \left\{ (x,y,\xi,\eta) \in T^* \rr {2d}: \ |y| + |\eta|^{\frac1s}  < c \left( |x| + |\xi|^{\frac1s} \right) \right\}
\end{equation*}
does not hold for any $c > 0$. Then for each $n \in \no$ there exists $(x_n,y_n,\xi_n,\eta_n) \in \WFgs (K)$ such that 
\begin{equation}\label{eq:phasespaceineq1}
|y_n| + |\eta_n|^{\frac1s} \geqs n \left( |x_n| + |\xi_n|^{\frac1s} \right).
\end{equation}
By rescaling $(x_n,y_n,\xi_n,\eta_n)$ as
$(x_n,y_n,\xi_n,\eta_n) \mapsto ( \lambda x_n, \lambda y_n, \lambda^{s} \xi_n, \lambda^{s} \eta_n)$
we obtain for a unique $\lambda = \lambda (x_n,y_n,\xi_n,\eta_n) > 0$ a vector in $\WFgs (K) \cap \sr {4d-1}$, 
cf. Section \ref{subsec:sconic}. 
This $s$-conic rescaling leaves \eqref{eq:phasespaceineq1} invariant. 
Abusing notation we still denote the rescaled vector $(x_n,y_n,\xi_n,\eta_n) \in \WFgs (K) \cap \sr {4d-1}$.

From \eqref{eq:phasespaceineq1}  it follows that $(x_n,\xi_n) \rightarrow 0$ as $n \rightarrow \infty$. 
Passing to a subsequence (without change of notation) and using the closedness of $\WFgs (K)$ gives
\begin{equation*}
(x_n,y_n,\xi_n,\eta_n) \rightarrow (0,y,0,\eta) \in \WFgs (K), \quad n \rightarrow \infty, 
\end{equation*}
for some $(y,\eta) \in \sr {2d-1}$. This implies $(y,-\eta) \in \WF_{\rm{g}, 2}^s(K)$ which is a contradiction. 

Similarly one shows 
\begin{equation*}
\WFgs (K) \subseteq \left\{ (x,y,\xi,\eta) \in T^* \rr {2d}: \  |x| + |\xi|^{\frac1s}  <  c \left( |y| + |\eta|^{\frac1s} \right) \right\}
\end{equation*}
for some $c > 0$ using $\WF_{\rm g,1}^s(K) = \emptyset$. 
\end{proof}

The set $\Gamma_1 \subseteq \rr {4d} \setminus 0$ in \eqref{eq:Gamma1} is open, and $s$-conic in the sense that it is closed with respect
to $(x,y,\xi,\eta) \mapsto ( \lambda x, \lambda y, \lambda^s \xi, \lambda^s \eta )$ for any $\lambda > 0$. 
Hence $(\rr {4d} \setminus \Gamma_1)$ is $s$-conic and $(\rr {4d} \setminus \Gamma_1) \cap \sr{4d-1}$ is compact. 
From \eqref{eq:WFgs2} we then obtain if $\Phi \in \cS(\rr {2d}) \setminus 0$
\begin{equation}\label{eq:WFKcomplement}
| V_\Phi K( x, y, \xi, - \eta) |  
\lesssim \eabs{(x,y,\xi,\eta)}^{-m}, \quad m \in \no, \quad ( x, y, \xi, - \eta) \in \rr {4d} \setminus \Gamma_1. 
\end{equation}

From \eqref{eq:Gamma1} and \eqref{eq:lambdaboundisotropic} it follows that 
\begin{equation}\label{eq:Gamma1proportionality}
(x,y,\xi, - \eta) \in \Gamma_1 \quad \Longrightarrow \quad
\eabs{(y,\eta)}^{\min \left( s, \frac1s\right)}
\lesssim \eabs{(x,\xi)}
\lesssim \eabs{(y,\eta)}^{\max \left( s, \frac1s\right)}. 
\end{equation}

A tempered distribution 
$K \in \cS'(\rr {2d})$ defines a continuous linear map $\cK: \cS (\rr d) \to \cS'(\rr d)$ by
\begin{equation}\label{eq:kernelop}
(\cK f, g) = (K, g \otimes \overline f), \quad f,g \in \cS(\rr d). 
\end{equation}

The following result says that the condition \eqref{eq:WFKjempty} implies continuity of $\cK$ on $\cS(\rr d)$
and a unique extension to a continuous operator on $\cS'(\rr d)$. 
This is the basis for the forthcoming result on propagation of the $s$-Gabor wave front sets Theorem \ref{thm:WFgspropagation}. 
In the proof we use the conventional notation (cf. \cite{Hormander1,Hormander2}) for the reflection operator in the fourth $\rr d$ coordinate in $\rr {4d}$
\begin{equation}\label{eq:reflection}
(x,y,\xi,\eta)' = (x,y,\xi,-\eta), \quad x,y,\xi,\eta \in \rr d. 
\end{equation}

\begin{prop}\label{prop:opSTFTformula}
Let $s > 0$ and let $\cK: \cS(\rr d) \to \cS'(\rr d)$ be the continuous linear operator \eqref{eq:kernelop}
defined by the Schwartz kernel $K \in \cS'(\rr {2d})$. 
If \eqref{eq:WFKjempty} holds then 

\begin{enumerate}[\rm (i)]

\item $\cK: \cS(\rr d) \to \cS(\rr d)$ is continuous; 

\item $\cK$ extends uniquely to a sequentially continuous linear operator $\cK: \cS' (\rr d) \to \cS' (\rr d)$; 

\item if $\fy \in \cS(\rr d)$, $\| \fy \|_{L^2} = 1$, $\Phi = \fy \otimes \fy \in \cS(\rr {2d})$, $u \in \cS'(\rr d)$ and $\psi \in \cS(\rr d)$, then
\begin{equation}\label{eq:opSTFT1}
(\cK u, \psi) 
= \int_{\rr {4d}} V_\Phi K(x,y,\xi,-\eta) \, \overline{V_\fy \psi (x,\xi)} \, V_{\overline \fy} u(y,\eta) \, \dd x \, \dd y \, \dd \xi \, \dd \eta. 
\end{equation}

\end{enumerate}
\end{prop}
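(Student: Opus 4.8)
The heart of the matter is formula \eqref{eq:opSTFT1}; parts (i) and (ii) will follow once we control the integral kernel in phase space. I would begin by deriving \eqref{eq:opSTFT1} formally using the STFT inversion and Moyal-type identities \eqref{eq:STFTinverse}--\eqref{eq:moyal}, and then justify the convergence of the integral using the decay estimate \eqref{eq:WFKcomplement} together with the polynomial bounds \eqref{eq:STFTtempered}. Concretely, starting from the definition $(\cK u, \psi) = (K, \psi \otimes \overline u)$ in \eqref{eq:kernelop}, I would apply the Moyal identity \eqref{eq:moyal} in $\rr{2d}$ to rewrite this as $(V_\Phi K, V_\Phi(\psi \otimes \overline u))_{L^2}$, and then use the factorization of the STFT of a tensor product, $V_{\fy \otimes \fy}(\psi \otimes \overline u)(x,y,\xi,\eta) = V_\fy \psi(x,\xi) \, V_\fy \overline u(y,\eta)$, observing that $V_\fy \overline u (y,\eta) = \overline{V_{\overline \fy} u (y,-\eta)}$ to match the reflection in the fourth coordinate recorded by \eqref{eq:reflection}.

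\emph{Convergence and continuity.} The key quantitative step is to show the integral in \eqref{eq:opSTFT1} converges absolutely when $\psi, u$ are suitably nice, and this is where the hypothesis \eqref{eq:WFKjempty} enters through Lemma \ref{lem:WFkernelaxes}. On the region $(x,y,\xi,-\eta) \in \rr{4d} \setminus \Gamma_1$ the kernel $V_\Phi K$ decays rapidly by \eqref{eq:WFKcomplement}, so that part of the integral is harmless against the polynomial growth of $V_\fy \psi$ and $V_{\overline \fy} u$. On the complementary region $\Gamma_1$, where $V_\Phi K$ may only grow polynomially by \eqref{eq:STFTtempered}, I would exploit the comparability \eqref{eq:Gamma1proportionality}, which forces $\eabs{(x,\xi)}$ and $\eabs{(y,\eta)}$ to be polynomially comparable. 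This lets me trade decay in the $(y,\eta)$ variables coming from the Schwartz rapid decay \eqref{eq:STFTschwartz} of $V_\fy \psi$ against the polynomial growth of $V_\Phi K$ and $V_{\overline \fy} u$, securing absolute convergence. To prove (i), I would then estimate the seminorms \eqref{eq:seminormsS} of $\cK \psi$ for $\psi \in \cS(\rr d)$: writing $V_\fy(\cK \psi)(x,\xi)$ via \eqref{eq:opSTFT1} with $u = \psi$ and differentiating or weighting by $\eabs{(x,\xi)}^N$, the same $\Gamma_1$-versus-complement split yields rapid decay of $V_\fy(\cK \psi)$, hence $\cK \psi \in \cS(\rr d)$ with continuous dependence.

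\emph{Extension to $\cS'$ and the main obstacle.} For (ii), the transpose argument is natural: since $\cK: \cS \to \cS$ is continuous by (i), and since the kernel $K' \in \cS'(\rr{2d})$ obtained by swapping the two $\rr d$ factors also satisfies the hypothesis \eqref{eq:WFKjempty} (because the roles of $\WF_{\rm g,1}^s$ and $\WF_{\rm g,2}^s$ merely interchange under the coordinate swap), the adjoint operator $\cK^*$ likewise maps $\cS \to \cS$ continuously. I would then define the extension $\cK: \cS'(\rr d) \to \cS'(\rr d)$ by duality, $(\cK u, \psi) := (u, \cK^* \psi)$ for $u \in \cS'$ and $\psi \in \cS$, check that this agrees with \eqref{eq:kernelop} on $\cS$, and verify sequential continuity directly from the definition of the weak-$*$ topology. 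The main obstacle I anticipate is not any single estimate but the bookkeeping in the convergence argument for \eqref{eq:opSTFT1}: one must simultaneously track three polynomially-or-rapidly-varying factors over $\rr{4d}$ and apply the correct exponents $\min(s,1/s)$ and $\max(s,1/s)$ from \eqref{eq:Gamma1proportionality} and \eqref{eq:lambdaboundisotropic2} so that the Schwartz decay in one block of variables genuinely dominates the growth in the others. Getting the anisotropic exponents to line up — rather than any conceptual difficulty — is the delicate point.
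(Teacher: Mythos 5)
Your quantitative core coincides with the paper's: the splitting of the phase-space integral into the region $\Gamma_1$ produced by Lemma \ref{lem:WFkernelaxes} and its complement, using \eqref{eq:WFKcomplement} off $\Gamma_1$ and the comparability \eqref{eq:Gamma1proportionality} on $\Gamma_1$ to let Schwartz decay in one block of variables absorb polynomial growth in the other, is exactly how the paper proves (i) and justifies every limit. Your derivation of \eqref{eq:opSTFT1} for $u,\psi\in\cS(\rr d)$ via \eqref{eq:moyal} and the tensor factorization of the STFT (with $V_\fy\overline u(y,\eta)=\overline{V_{\overline\fy}u(y,-\eta)}$) is also sound; the paper simply cites \cite[Lemma~5.1]{Wahlberg1} for this step. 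Where you genuinely diverge is (ii): the paper builds the extension concretely, approximating $u\in\cS'(\rr d)$ by truncated STFT inversions $u_N$ and passing to the limit by dominated convergence, with sequential continuity obtained via Banach--Steinhaus; you instead extend by duality through the adjoint $\cK^*$. That route is legitimate: the hypothesis \eqref{eq:WFKjempty} is stable under passing to the adjoint kernel $\overline{K(y,x)}$, since the coordinate swap combined with conjugation interchanges $\WF_{\rm g,1}^s$ and $\WF_{\rm g,2}^s$ up to frequency sign flips (this uses the behaviour of $\WFgs$ under linear coordinate changes and conjugation, \cite[Proposition~4.3]{Rodino4}), and it buys you sequential continuity of the extension for free from the weak-$*$ definition.

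However, this choice leaves a genuine gap at (iii). Part (iii) asserts the integral formula for \emph{every} $u\in\cS'(\rr d)$, and your derivation only works for $u\in\cS(\rr d)$: the Moyal step needs $\psi\otimes\overline u\in\cS(\rr {2d})$, and for $u\in\cS'(\rr d)$ the pairing $(K,\psi\otimes\overline u)$ does not even make sense --- that is the whole point of constructing an extension. Once you define $\cK u$ by duality as $(u,\cK^*\psi)$, nothing in your proposal shows that this number equals the (absolutely convergent, as you correctly argue) integral in \eqref{eq:opSTFT1}. This is not a decorative claim: formula \eqref{eq:opSTFT1} for $u\in\cS'(\rr d)$ is precisely what the paper feeds into the proof of Theorem \ref{thm:WFgspropagation}, via \eqref{eq:opSTFTest1}. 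To close the gap you need the approximation machinery you hoped to bypass: take $u_N\in\cS(\rr d)$ with $u_N\to u$ in $\cS'(\rr d)$ \emph{and} with $V_{\overline\fy}u_N$ polynomially bounded uniformly in $N$ (the paper's truncation $u_N=(2\pi)^{-d/2}\int_{|z|\leqs N}V_\fy u(z)\,\Pi(z)\fy\,\dd z$ yields the uniform bound \eqref{eq:STFTupperbound3}); then the left side of \eqref{eq:opSTFT1} converges by your sequential continuity, and the right side converges by dominated convergence, the majorant being supplied by exactly your $\Gamma_1$-versus-complement estimates. So the delicate point is not only "getting the anisotropic exponents to line up": it is that (iii) for general tempered $u$ requires an approximation-plus-domination argument which your duality shortcut does not replace.
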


\begin{proof}
By \cite[Lemma~5.1]{Wahlberg1} the formula \eqref{eq:opSTFT1} holds for $u,\psi \in \cS(\rr d)$. 

Let $\fy \in \cS(\rr d)$ satisfy $\| \fy \|_{L^2} = 1$ and set $\Phi = \fy \otimes \fy \in \cS(\rr {2d})$. 
Since
\begin{equation*}
\overline{V_\fy \Pi(x,\xi) \fy (y,\eta)} = e^{i \la y, \eta - \xi \ra} V_\fy \fy ( x-y, \xi - \eta)
\end{equation*}
we get from \eqref{eq:opSTFT1} for $u \in \cS (\rr d)$ and $(x,\xi) \in T^* \rr d$
\begin{equation}\label{eq:STFTKu}
\begin{aligned}
& V_\fy(\cK u) (x, \xi) 
= (2 \pi)^{-\frac{d}{2}} (\cK u, \Pi(x,\xi) \fy) \\
& = (2 \pi)^{-\frac{d}{2}} \int_{\rr {4d}} e^{i \la y,\eta -\xi \ra} V_\Phi K (y,z,\eta,-\theta) V_\fy \fy (x-y,\xi-\eta) \, V_{\overline \fy} u(z,\theta) \, \dd y \, \dd z \, \dd \eta \, \dd \theta
\end{aligned}
\end{equation}
which gives 
\begin{equation}\label{eq:STFTKuabs}
|V_\fy(\cK u) (x, \xi)|
\lesssim \int_{\rr {4d}} | V_\Phi K (y,z,\eta,-\theta) | \, | V_\fy \fy (x-y,\xi-\eta)| \, | V_{\overline \fy} u(z,\theta)| \, \dd y \, \dd z \, \dd \eta \, \dd \theta. 
\end{equation}

We use the seminorms \eqref{eq:seminormsS} for $\cS(\rr d)$.
Let $n \in \no$ and
consider first the right hand side integral in \eqref{eq:STFTKuabs} over $(y,z,\eta,-\theta) \in \rr {4d} \setminus \Gamma_1$
where $\Gamma_1$ is defined by \eqref{eq:Gamma1} with $c > 1$ chosen 
so that $\WFgs (K) \subseteq \Gamma_1$.
By Lemma \ref{lem:WFkernelaxes} we may use the estimates \eqref{eq:WFKcomplement}. 
Using \eqref{eq:Peetre} and \eqref{eq:STFTschwartz} we obtain for any $m \in \no$
\begin{equation}\label{eq:STFTKuabs1}
\begin{aligned}
& \int_{\rr {4d} \setminus \Gamma_1'} 
|V_\Phi K(y,z,\eta,-\theta)| \,  | V_\fy \fy (x-y,\xi-\eta)|  \, |V_{\overline \fy} u (z,\theta)| \, \dd y \, \dd z \, \dd \eta \, \dd \theta \\
& \lesssim 
\int_{\rr {4d} \setminus \Gamma_1'} 
\eabs{(y,z,\eta,\theta)}^{-m} \,  \eabs{(x-y,\xi-\eta)}^{-n} \, |V_{\overline \fy} u (z,\theta)| \, \dd y \, \dd z \, \dd \eta \, \dd \theta \\
& \lesssim 
\| u \|_0 \eabs{(x,\xi)}^{-n} 
\int_{\rr {4d}}  \eabs{(y,z,\eta,\theta)}^{n-m}\, \dd y \, \dd z \, \dd \xi \, \dd \eta \\
& \lesssim \| u \|_0 \eabs{(x,\xi)}^{-n}
\end{aligned}
\end{equation}
provided $m > n + 4 d$. 

Next we consider the right hand side integral \eqref{eq:STFTKuabs} over $(y,z,\eta,-\theta) \in \Gamma_1$. 
Then we may 
use \eqref{eq:Gamma1proportionality}. 
From \eqref{eq:STFTtempered}  and \eqref{eq:STFTschwartz} we obtain for some $m \geqs 0$
and any $k \geqs 0$
\begin{equation}\label{eq:STFTKuabs2}
\begin{aligned}
& \int_{\Gamma_1'} 
|V_\Phi K(y,z,\eta,-\theta)| \,  | V_\fy \fy (x-y,\xi-\eta)|  \, |V_{\overline \fy} u (z,\theta)| \, \dd y \, \dd z \, \dd \eta \, \dd \theta \\
& \lesssim 
\| u \|_k
\eabs{(x,\xi)}^{-n} 
\int_{\Gamma_1'} 
\eabs{(y,z,\eta,\theta)}^{m+4d+1-4d-1} \, \eabs{(y,\eta)}^{n}  \,\eabs{(z,\theta)}^{-k}  \, \dd x \, \dd y \, \dd \xi \, \dd \eta \\
& \lesssim 
\| u \|_k \eabs{(x,\xi)}^{-n} 
\int_{\Gamma_1'} 
\eabs{(y,z,\eta,\theta)}^{-4d-1} \eabs{(z,\theta)}^{(m+4d+1) \left(1 + \max \left( s, \frac1s \right) \right) + n \max \left( s, \frac1s \right) - k } 
\dd x \dd y \dd \xi \dd \eta \\
& \lesssim 
\| u \|_k \eabs{(x,\xi)}^{-n} 
\end{aligned}
\end{equation}
provided $k > 0$ is sufficiently large. 

Combining \eqref{eq:STFTKuabs1} and \eqref{eq:STFTKuabs2} we obtain from \eqref{eq:STFTKuabs}
$\| \cK u \|_n \lesssim \| u \|_k$, which proves claim (i). 

To show claims (ii) and (iii) let $u \in \cS'(\rr d)$ and set 
for $N \in \no$
\begin{equation*}
u_N = (2 \pi)^{-\frac{d}{2}} \int_{|z| \leqs N} V_\fy u(z) \Pi(z) \fy \, \dd z. 
\end{equation*}

From \eqref{eq:STFTtempered} for some $k \geqs 0$, and \eqref{eq:STFTschwartz}
we obtain for any $n \geqs 0$
\begin{align*}
\eabs{w}^n |V_\fy u_N (w)| 
& \lesssim \int_{|z| \leqs N} |V_\fy u(z)| \, \eabs{w}^n |V_\fy \fy(w-z)| \, \dd z \\
& \lesssim \int_{|z| \leqs N} \eabs{z}^k \, \eabs{w}^n \, \eabs{w-z}^{-n} \, \dd z \\
& \lesssim \int_{|z| \leqs N} \eabs{z}^{k+n} \, \dd z 
\leqs C_{N,n}, \quad w \in \rr {2d}. 
\end{align*}
Referring to the seminorms \eqref{eq:seminormsS} shows 
that $u_N \in \cS(\rr d)$ for $N \in \no$. 
The fact that $u_N \to u$ in $\cS'(\rr d)$ as $N \to \infty$ is a consequence of 
\eqref{eq:moyal}, 
\eqref{eq:STFTtempered}, \eqref{eq:STFTschwartz} and dominated convergence. 

We also need the estimate (cf. \cite[Eq.~(11.29)]{Grochenig1})
\begin{equation*}
|V_{\overline{\fy}} u_N (z)| \leqs (2 \pi)^{-\frac{d}{2}} |V_\varphi u| * |V_{\overline{\fy}} \fy| (z), \quad z \in \rr {2d}, 
\end{equation*}
which in view of \eqref{eq:STFTtempered} and \eqref{eq:STFTschwartz}
gives the bound
\begin{equation}\label{eq:STFTupperbound3}
|V_{\overline{\fy}} u_N (z)| \lesssim \eabs{z}^{k + 2d + 1}, \quad z \in \rr {2d}, \quad N \in \no, 
\end{equation}
that holds uniformly over $N \in \no$, for some $k \in \no$. 

We are now in a position to assemble the ingredients into a proof of formula 
\eqref{eq:opSTFT1} for $u \in \cS'(\rr d)$ and $\psi \in \cS(\rr d)$. 
Set
\begin{equation}\label{eq:STFTequalitylimit}
\begin{aligned}
(\cK u, \psi) 
& = \lim_{N \to \infty} (\cK u_N, \psi) \\
& = \lim_{N \to \infty} 
\int_{\rr {4d}} V_\Phi K(x,y,\xi,-\eta) \, \overline{V_\fy \psi (x,\xi)} \, V_{\overline \fy} u_N (y,\eta) \, \dd x \, \dd y \, \dd \xi \, \dd \eta.
\end{aligned}
\end{equation}
Since $V_{\overline \fy} u_N(y,\eta) \to V_{\overline \fy} u(y,\eta)$ as $N \to \infty$ for all $(y,\eta) \in \rr {2d}$, the formula \eqref{eq:opSTFT1} follows from dominated convergence if we can show that the modulus of the integrand in \eqref{eq:STFTequalitylimit} is bounded by an integrable function that does not depend on $N \in \no$, which we now set out to do. 

Consider first the right hand side integral over $(x,y,\xi,-\eta) \in \rr {4d} \setminus \Gamma_1$
where $\Gamma_1$ is defined by \eqref{eq:Gamma1} with $c > 1$ again chosen 
so that $\WFgs (K) \subseteq \Gamma_1$.
By Lemma \ref{lem:WFkernelaxes} we may use the estimates \eqref{eq:WFKcomplement}. 
Using \eqref{eq:STFTupperbound3} we obtain for any $m \in \no$
\begin{equation}\label{eq:seminormest1}
\begin{aligned}
& \int_{\rr {4d} \setminus \Gamma_1'} 
|V_\Phi K(x,y,\xi,-\eta)| \,  |V_\fy \psi (x,\xi)| \, |V_{\overline \fy} u_N (y,\eta)| \, \dd x \, \dd y \, \dd \xi \, \dd \eta \\
& \lesssim 
\int_{\rr {4d} \setminus \Gamma_1'} 
\eabs{(x,y,\xi,\eta)}^{-m} \, |V_\fy \psi (x,\xi)| \, \eabs{(y,\eta)}^{k + 2 d + 1} \, \dd x \, \dd y \, \dd \xi \, \dd \eta \\
& \lesssim \sup_{z \in \rr {2d}} |V_\fy \psi (z)| 
\int_{\rr {4d}}  \eabs{(x,y,\xi,\eta)}^{k + 2 d + 1 - m} \, \dd x \, \dd y \, \dd \xi \, \dd \eta \\
& \lesssim \sup_{z \in \rr {2d}} |V_\fy \psi (z)| < \infty
\end{aligned}
\end{equation}
provided $m > 0$ is sufficiently large. 

Next we consider the right hand side integral \eqref{eq:STFTequalitylimit} over $(x,y,\xi,-\eta) \in \Gamma_1$
where we may 
use \eqref{eq:Gamma1proportionality}. 
Again from \eqref{eq:STFTtempered} we obtain for some $m \geqs 0$
\begin{equation}\label{eq:seminormest2}
\begin{aligned}
& \int_{\Gamma_1'} 
|V_\Phi K(x,y,\xi,-\eta)| \,  |V_\fy \psi (x,\xi)| \, |V_{\overline \fy} u_N (y,\eta)| \, \dd x \, \dd y \, \dd \xi \, \dd \eta \\
& \lesssim 
\int_{\Gamma_1'} 
\eabs{(x,y,\xi,\eta)}^{m+4d+1-4d-1} \, |V_\fy \psi (x,\xi)| \, \eabs{(y,\eta)}^{k + 2 d + 1} \, \dd x \, \dd y \, \dd \xi \, \dd \eta \\
& \lesssim 
\int_{\Gamma_1'} 
\eabs{(x,y,\xi,\eta)}^{-4d-1} \, \eabs{(x,\xi)}^{(m+6d+2+k) \left(1 + \max \left( s, \frac1s \right)\right)} \, |V_\fy \psi (x,\xi)| \, \dd x \, \dd y \, \dd \xi \, \dd \eta \\
& \lesssim \sup_{z \in \rr {2d}} \eabs{z}^{ (m+6d+2+k) \left(1 + \max \left( s, \frac1s \right)\right) } |V_\fy \psi (z)| < \infty. 
\end{aligned}
\end{equation}
The estimates \eqref{eq:seminormest1} and \eqref{eq:seminormest2} prove our claim that  
the modulus of the integrand in right hand side of \eqref{eq:STFTequalitylimit} is bounded by an $L^1(\rr {4d})$ function uniformly over $N \in \no$. 
Thus \eqref{eq:STFTequalitylimit} extends the domain of $\cK$ from $\cS(\rr d)$ to $\cS'(\rr d)$. 
We have shown claim (iii). 

From \eqref{eq:seminormest1} and \eqref{eq:seminormest2} we also see that 
$\cK$ extended to the domain $\cS'(\rr d)$ satisfies 
$\cK u \in \cS'(\rr d)$ when $u \in \cS'(\rr d)$. 
To prove claim (ii) it remains to show the sequential continuity of the extension  \eqref{eq:STFTequalitylimit} on $\cS'(\rr d)$. 
The uniqueness of the extension is a consequence of the continuity. 

Let $(u_n)_{n = 1}^\infty \subseteq \cS'(\rr d)$ be a sequence such that $u_n \to 0$ in $\cS'(\rr d)$ as $n \to \infty$. 
Then $V_{\overline \fy} u_n(y,\eta) \to 0$ as $n \to \infty$ for all $(y,\eta) \in \rr {2d}$. 
By the Banach--Steinhaus theorem \cite[Theorem~V.7]{Reed1}, $(u_n)_{n = 1}^\infty$ is equicontinuous. 
This means that there exists $m \in \no$ such that 
\begin{equation*}
|(u_n, \psi)| 
\lesssim \| \psi \|_m
= \sup_{w \in \rr {2d}} \eabs{w}^m |V_\fy \psi (w)|, \quad \psi \in \cS(\rr d), \quad n \in \no. 
\end{equation*}

Hence
\begin{align*}
|V_{\overline \fy} u_n(z)| 
& = (2 \pi)^{- \frac{d}{2}} |(u_n, \Pi(z) \overline \fy )| 
\lesssim \sup_{w \in \rr {2d}} \eabs{w}^m |V_\fy (\Pi(z) \overline \fy) (w)| \\
& = \sup_{w \in \rr {2d}} \eabs{w}^m |V_\fy \overline \fy (w-z) | 
\lesssim \sup_{w \in \rr {2d}} \eabs{w}^m \eabs{w-z}^{-m} 
\lesssim \eabs{z}^m, \quad z \in \rr {2d},
\end{align*}
uniformly for all $n \in \no$. 
From \eqref{eq:opSTFT1}, the estimates \eqref{eq:seminormest1}, \eqref{eq:seminormest2}, and dominated convergence it follows 
that $(\cK u_n, \psi) \to 0$ as $n \to \infty$ for all $\psi \in \cS(\rr d)$, that is $\cK u_n \to 0$ in $\cS'(\rr d)$. 
This finally proves claim (ii). 
\end{proof}

Now we start to prepare for the main result Theorem \ref{thm:WFgspropagation}. 
We need the relation mapping between a subset $A \subseteq X \times Y$ of the Cartesian product of two sets $X$, $Y$, and a subset $B \subseteq Y$, 
\begin{equation*}
A \circ B = \{ x \in X: \, \exists y \in B: \, (x,y) \in A \} \subseteq X.  
\end{equation*}
When $X = Y = \rr {2d}$ we use the convention
\begin{equation*}
A' \circ B  = \{ (x,\xi) \in \rr {2d}: \,  \exists (y,\eta) \in B: \, (x,y,\xi,-\eta) \in A \}. 
\end{equation*}
Note that there is a swap of the second and third variables. 

If we denote by 
\begin{align*}
\pi_{1,3}(x,y,\xi,\eta) & = (x,\xi), \\
\pi_{2,-4}(x,y,\xi,\eta) & = (y,-\eta), \quad x,y,\xi, \eta \in \rr d, 
\end{align*}
the projections $\rr {4d} \rightarrow \rr {2d}$ onto the first and the third $\rr d$ coordinate, 
and onto the second and the fourth $\rr d$ coordinate with a change of sign in the latter, respectively, then we may write
\begin{equation}\label{eq:relationproj}
\WFgs (K)' \circ \WFgs (u) 
= \pi_{1,3} \left( \WFgs (K) \cap \pi_{2,-4}^{-1} \WFgs (u) \right). 
\end{equation}

We need a lemma which is similar to \cite[Lemma~5.1]{Wahlberg3}. 

\begin{lem}\label{lem:sconicclosed}
If $s > 0$, $K \in \cS'(\rr {2d})$, \eqref{eq:WFKjempty} holds and $u \in \cS'(\rr d)$
then 
\begin{equation*}
\WF_{\rm g}^s (K)' \circ \WF_{\rm g}^s (u) \subseteq T^* \rr d \setminus 0
\end{equation*}
is $s$-conic and closed in $T^* \rr d \setminus 0$. 
\end{lem}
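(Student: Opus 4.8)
The plan is to handle the two assertions separately, reading the composition through the projection identity \eqref{eq:relationproj} and exploiting the geometric consequence of the hypothesis \eqref{eq:WFKjempty} furnished by Lemma \ref{lem:WFkernelaxes}. The $s$-conic property is immediate from the definition. If $(x,\xi) \in \WFgs(K)' \circ \WFgs(u)$, pick a witness $(y,\eta) \in \WFgs(u)$ with $(x,y,\xi,-\eta) \in \WFgs(K)$, and fix $\lambda > 0$. Since $\WFgs(u)$ is $s$-conic we have $(\lambda y, \lambda^s \eta) \in \WFgs(u)$, and since $\WFgs(K)$ is $s$-conic in $\rr{4d}$ we have $(\lambda x, \lambda y, \lambda^s \xi, -\lambda^s \eta) \in \WFgs(K)$. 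Using $(\lambda y, \lambda^s \eta)$ as witness shows $(\lambda x, \lambda^s \xi) \in \WFgs(K)' \circ \WFgs(u)$, which is exactly closure under the $s$-dilation.

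The real content is closedness, which I would prove by sequences. Suppose $(x_n,\xi_n) \in \WFgs(K)' \circ \WFgs(u)$ converge to $(x_0,\xi_0) \in T^*\rr d \setminus 0$, and choose witnesses $(y_n,\eta_n) \in \WFgs(u)$ with $(x_n,y_n,\xi_n,-\eta_n) \in \WFgs(K)$. By Lemma \ref{lem:WFkernelaxes} each such quadruple lies in the set $\Gamma_1$ of \eqref{eq:Gamma1}, so
\[
c^{-1}\left(|x_n| + |\xi_n|^{\frac1s}\right) < |y_n| + |\eta_n|^{\frac1s} < c\left(|x_n| + |\xi_n|^{\frac1s}\right).
\]
As $(x_n,\xi_n)$ converges it is bounded, so the upper bound forces the sequence $(y_n,\eta_n)$ to be bounded, and I may pass to a convergent subsequence $(y_{n_k},\eta_{n_k}) \to (y_0,\eta_0)$.

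It then remains to identify the limit as an admissible witness. Since $(x_0,\xi_0) \neq 0$ the quantity $|x_0| + |\xi_0|^{\frac1s}$ is strictly positive, and by continuity of $\eta \mapsto |\eta|^{\frac1s}$ the lower bound above passes to the limit to give $|y_0| + |\eta_0|^{\frac1s} \geqs c^{-1}(|x_0| + |\xi_0|^{\frac1s}) > 0$, so $(y_0,\eta_0) \in T^*\rr d \setminus 0$. Closedness of $\WFgs(u)$ yields $(y_0,\eta_0) \in \WFgs(u)$, while $(x_{n_k},y_{n_k},\xi_{n_k},-\eta_{n_k}) \to (x_0,y_0,\xi_0,-\eta_0)$, a point away from the origin because $(x_0,\xi_0) \neq 0$, so closedness of $\WFgs(K)$ gives $(x_0,y_0,\xi_0,-\eta_0) \in \WFgs(K)$. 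Hence $(y_0,\eta_0)$ is a witness placing $(x_0,\xi_0)$ in $\WFgs(K)' \circ \WFgs(u)$, which is the desired closedness.

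The step I expect to be the main obstacle is the control of the auxiliary sequence $(y_n,\eta_n)$: the projection $\pi_{1,3}$ is not proper on a general closed set, so the image of a closed set need not be closed, and the conclusion would genuinely fail without further hypotheses. The assumption \eqref{eq:WFKjempty}, through the two-sided comparison defining $\Gamma_1$, is precisely what provides both the compactness needed to extract a convergent subsequence of witnesses and the lower bound that keeps the limiting witness off the origin.
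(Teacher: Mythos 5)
Your proof is correct and takes essentially the same route as the paper: the identical dilation argument for the $s$-conic property, and for closedness the same sequence-of-witnesses argument in which Lemma \ref{lem:WFkernelaxes} supplies the boundedness of $(y_n,\eta_n)$ needed to extract a convergent subsequence, followed by closedness of $\WFgs(K)$ and $\WFgs(u)$. The only cosmetic difference is that you rule out $(y_0,\eta_0)=0$ via the lower bound defining $\Gamma_1$ in \eqref{eq:Gamma1}, whereas the paper deduces it directly from $\WF_{\rm g,1}^s(K)=\emptyset$; both rest on the hypothesis \eqref{eq:WFKjempty}.
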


\begin{proof}
Let $(x,\xi) \in \WFgs (K)' \circ \WFgs (u)$. 
Then there exists $(y,\eta) \in \WFgs (u)$ such that $(x,y,\xi,-\eta) \in \WFgs (K)$. 
Let $\lambda > 0$.
Since $\WFgs (K)$ and $\WFgs (u)$ are $s$-conic we have 
$( \lambda x, \lambda y, \lambda^s \xi,- \lambda^s \eta) \in \WFgs (K)$
and $(\lambda y, \lambda^s \eta) \in \WFgs (u)$. 
It follows that $(\lambda x, \lambda^s \xi) \in \WFgs (K)' \circ \WFgs (u)$ 
which shows that $\WFgs (K)' \circ \WFgs (u)$ is $s$-conic. 

Next we assume that $(x_n,\xi_n) \in \WFgs (K)' \circ \WFgs (u)$ for $n \in \no$
and $(x_n, \xi_n) \to (x,\xi) \neq 0$ as $n \to +\infty$. 
For each $n \in \no$ there exists $(y_n,\eta_n) \in \WFgs (u)$ such that 
$(x_n,y_n,\xi_n,-\eta_n) \in \WFgs (K)$. 

Since the sequence $\{ (x_n, \xi_n)_n \} \subseteq T^* \rr d$ is bounded it follows from Lemma \ref{lem:WFkernelaxes}
that also the sequence $\{ (y_n, \eta_n)_n \} \subseteq T^* \rr d$ is bounded. 
Passing to a subsequence (without change of notation) we get convergence
\begin{equation*}
\lim_{n \to +\infty} (x_n,y_n,\xi_n,-\eta_n) 
= (x,y,\xi,-\eta) \in \rr {4d} \setminus 0. 
\end{equation*}
Here $(x,y,\xi,-\eta) \in \WFgs (K)$ since $\WFgs (K) \subseteq T^* \rr {2d} \setminus 0$ is closed, and 
$(y,\eta) \neq 0$ due to the assumption $\WF_{\rm g,1}^{s} (K) = \emptyset$. 
Moreover $(y,\eta) \in \WFgs (u)$ since $\WFgs (u) \subseteq T^* \rr d \setminus 0$ is closed, 
We have proved that $(x,\xi) \in \WFgs (K)' \circ \WFgs (u)$ which shows that $\WFgs (K)' \circ \WFgs (u)$ is closed in $T^* \rr d \setminus 0$. 
\end{proof}

Finally we may state and prove our main result on propagation of singularities.

\begin{thm}\label{thm:WFgspropagation}
Let $s > 0$ and let $\cK: \cS(\rr d) \to \cS'(\rr d)$ be the continuous linear operator \eqref{eq:kernelop}
defined by the Schwartz kernel $K \in \cS'(\rr {2d})$, and suppose that \eqref{eq:WFKjempty} holds. 
Then for $u \in \cS'(\rr d)$
we have
\begin{equation*}
\WF_{\rm g}^s (\cK u) \subseteq \WF_{\rm g}^s (K)' \circ \WF_{\rm g}^s (u).  
\end{equation*}
\end{thm}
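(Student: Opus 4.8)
The plan is to prove the wave front set inclusion directly from the STFT formula \eqref{eq:opSTFT1} established in Proposition \ref{prop:opSTFTformula}(iii). I would fix a point $(x_0,\xi_0) \in T^* \rr d \setminus 0$ that does \emph{not} belong to $\WFgs (K)' \circ \WFgs (u)$, normalize $|(x_0,\xi_0)|=1$, and show that $(x_0,\xi_0) \notin \WFgs (\cK u)$ by verifying the decay criterion \eqref{eq:WFgs2} for $V_\fy (\cK u)$ on a small $s$-conic neighborhood $\wt \Gamma_{(x_0,\xi_0),\ep}$. The starting point is the convolution-type bound \eqref{eq:STFTKuabs}, which expresses $|V_\fy (\cK u)(x,\xi)|$ as an integral over $(y,z,\eta,\theta)$ of the product $|V_\Phi K(y,z,\eta,-\theta)|\,|V_\fy\fy(x-y,\xi-\eta)|\,|V_{\overline\fy}u(z,\theta)|$.

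The key geometric observation is that the hypothesis $(x_0,\xi_0) \notin \WFgs (K)' \circ \WFgs (u)$, combined with the relation \eqref{eq:relationproj} and the closedness/$s$-conicity from Lemma \ref{lem:sconicclosed}, means that for $(x,\xi)$ in a sufficiently narrow $s$-cone around $(x_0,\xi_0)$ there is \emph{no} $(z,\theta) \in \WFgs(u)$ with $(x,z,\xi,-\theta) \in \WFgs(K)$. I would split the integration domain for $(y,z,\eta,\theta)$ into regions and argue that on each region at least one of the three factors provides superpolynomial decay. Concretely: where $(z,\theta)$ stays in an $s$-cone away from $\WFgs(u)$, the factor $|V_{\overline\fy}u(z,\theta)|$ decays rapidly by \eqref{eq:WFgs2}; where $(y,z,\eta,-\theta)$ stays away from $\WFgs(K)$, the factor $|V_\Phi K|$ decays rapidly as in \eqref{eq:WFKcomplement}; and in the remaining region, where both $(z,\theta)$ is near $\WFgs(u)$ and $(y,z,\eta,-\theta)$ is near $\WFgs(K)$, the separation hypothesis forces $(y,\eta)$ to be $s$-conically \emph{far} from $(x_0,\xi_0)$, so that for $(x,\xi)$ in the narrow cone the kernel factor $|V_\fy\fy(x-y,\xi-\eta)|$ is evaluated at a large argument and decays rapidly. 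Throughout I would use Lemma \ref{lem:WFkernelaxes} (via the graph condition \eqref{eq:WFKjempty}) to control the magnitude of $(y,\eta)$ in terms of $(z,\theta)$, exactly as in \eqref{eq:Gamma1proportionality}, keeping the tempered growth of $V_{\overline\fy}u$ from \eqref{eq:STFTtempered} dominated by the rapid decay supplied elsewhere, and Peetre's inequality \eqref{eq:Peetre} to convert shifted brackets.

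\textbf{The main obstacle} I anticipate is making the region decomposition precise and quantitative in the anisotropic ($s \neq 1$) setting. The three decay mechanisms live on $s$-cones defined through the projection $\pi_s$ and the weight $\mu_s$, which scale inhomogeneously in the two variable blocks; the delicate point is to choose the aperture $\ep$ of the cone around $(x_0,\xi_0)$ and the apertures of the cones isolating $\WFgs(K)$ and $\WFgs(u)$ so that the "bad" region (near both wave front sets simultaneously) genuinely forces $(y,\eta)$ away from the diagonal, with a quantitative lower bound on $|(x-y,\xi-\eta)|$ that survives the $s$-anisotropic rescaling. Here I would rely on the topological equivalence of the two cone families from \cite[Lemma~3.7]{Rodino4} to pass freely between $\Gamma$ and $\wt\Gamma$ neighborhoods, and on the compactness of $\sr{4d-1}$ intersected with complements of open $s$-cones to extract uniform constants. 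Once the three-way split with uniform rapid-decay bounds is in place, integrating against the fixed Schwartz factor $V_\fy\fy$ and summing the contributions yields $\sup \eabs{(x,\xi)}^N |V_\fy(\cK u)(x,\xi)| < \infty$ on the cone for every $N$, which by \eqref{eq:WFgs2} gives $(x_0,\xi_0) \notin \WFgs(\cK u)$ and hence the claimed inclusion.
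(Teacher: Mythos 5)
Your proposal follows essentially the same route as the paper's proof: starting from the STFT representation of Proposition \ref{prop:opSTFTformula}\,(iii), you split the integration domain into the same three regions (away from $\WFgs(K)$, where $(z,\theta)$ avoids an $s$-cone around $\WFgs(u)$, and the remainder where the separation hypothesis pushes $(y,\eta)$ out of the cone around $(x_0,\xi_0)$ so that $V_\fy\fy$ supplies the decay), with Lemma \ref{lem:WFkernelaxes} and \eqref{eq:Gamma1proportionality} controlling $(y,\eta)$ against $(z,\theta)$ exactly as in the paper. The quantitative cone-separation step you flag as the main obstacle is precisely what the paper resolves by invoking \cite[Lemma~5.4]{Wahlberg3} together with the lower bound $|(\lambda x,\lambda^s\xi)-(y,\eta)|\gtrsim\lambda^{\min(1,s)}\ep$, so your outline is correct and matches the paper's argument.
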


\begin{proof}
By Proposition \ref{prop:opSTFTformula} $\cK: \cS (\rr d) \to \cS(\rr d)$ is continuous and extends
uniquely to a continuous linear operator $\cK: \cS' (\rr d) \to \cS'(\rr d)$. 

Let $\fy \in \cS(\rr d)$ satisfy $\| \fy \|_{L^2} = 1$ and set $\Phi = \fy \otimes \fy \in \cS(\rr {2d})$. 
Proposition \ref{prop:opSTFTformula}, \eqref{eq:opSTFT1} and \eqref{eq:STFTKu} give
for $u \in \cS' (\rr d)$ and $(x,\xi) \in T^* \rr d$ and $\lambda > 0$
\begin{equation}\label{eq:opSTFTest1}
| V_\fy(\cK u) ( \lambda x, \lambda^s \xi) | 
\lesssim \int_{\rr {4d}} | V_\Phi K (y,z,\eta,-\theta) | \, |V_\fy \fy ( \lambda x-y, \lambda^s \xi-\eta) | \, | V_{\overline \fy} u(z,\theta) | \, \dd y \, \dd z \, \dd \eta \, \dd \theta. 
\end{equation}

Suppose $z_0 = (x_0,\xi_0) \in T^* \rr d \setminus 0$ and
\begin{equation}\label{eq:notWFs1}
z_0 \notin \WFgs (K)' \circ \WFgs (u).
\end{equation}
To prove the theorem we will show $z_0\notin \WFgs (\cK u)$. 

By Lemma \ref{lem:sconicclosed} the set $\WFgs (K)' \circ \WFgs (u)$ is $s$-conic and closed. 
Thus we may assume that $z_0 \in \sr {2d-1}$.
Moreover, with $\wt \Gamma_{z_0,2 \ep} = \wt \Gamma_{s, z_0, 2 \ep}$,
there exists $\ep > 0$ such that 
\begin{equation*}
\overline{\wt \Gamma}_{z_0,2 \ep} \cap \left( \WFgs (K)' \circ \WFgs (u) \right)= \emptyset. 
\end{equation*}
Here $\overline{\wt \Gamma}_{z_0,2 \ep}$ denotes the closure of $\wt \Gamma_{z_0,2 \ep}$ in $T^* \rr d \setminus 0$. 
Using \eqref{eq:relationproj} we may write this as 
\begin{equation*}
\overline{\wt \Gamma}_{z_0,2 \ep} \cap \pi_{1,3} \left( \WFgs (K) \cap \pi_{2,-4}^{-1} \WFgs (u) \right)= \emptyset
\end{equation*}
or equivalently 
\begin{equation*}
\pi_{1,3}^{-1} \overline{\wt \Gamma}_{z_0,2 \ep} \cap \WFgs (K) \cap \pi_{2,-4}^{-1} \WFgs (u) = \emptyset. 
\end{equation*}

Due to assumption \eqref{eq:WFKjempty} we may strengthen this into 
\begin{equation*}
\pi_{1,3}^{-1} \, (\overline{\wt \Gamma}_{z_0,2 \ep} \cup \{ 0 \} ) \setminus 0 \cap \WFgs (K) \cap \pi_{2,-4}^{-1} \, (\WFgs (u) \cup \{ 0 \} ) \setminus 0 = \emptyset.  
\end{equation*}
Note that $\pi_{1,3}^{-1} \, (\overline{\wt \Gamma}_{z_0,2 \ep} \cup \{ 0 \} ) \setminus 0$,  
$\WFgs (K)$,
and $\pi_{2,-4}^{-1} \, (\WFgs (u) \cup \{ 0 \} ) \setminus 0$ are all closed and $s$-conic
subsets of $T^* \rr {2d} \setminus 0$. 

Now \cite[Lemma~5.4]{Wahlberg3} gives the following conclusion. 
There exists open $s$-conic subsets $\Gamma_1 \subseteq T^* \rr {2d} \setminus 0$ 
and $\Gamma_2 \subseteq T^* \rr d \setminus 0$
such that  
\begin{equation*}
\WFgs (K) \subseteq \Gamma_1, \quad \WFgs (u) \subseteq \Gamma_2
\end{equation*}
and 
\begin{equation}\label{eq:emptyintersection}
\pi_{1,3}^{-1} \overline{\wt \Gamma}_{z_0,2 \ep}
\cap \Gamma_1 \cap \pi_{2,-4}^{-1} \Gamma_2 = \emptyset. 
\end{equation}
By intersecting $\Gamma_1$ with the set $\Gamma_1$ defined in \eqref{eq:Gamma1}, 
we may by Lemma \ref{lem:WFkernelaxes} assume that \eqref{eq:Gamma1} holds true.

We will now start to estimate the integral \eqref{eq:opSTFTest1} when $(x,\xi) \in (x_0, \xi_0) + \rB_\ep$ for some $0 < \ep \leqs \frac12$
and $\lambda \geqs 1$. 

We split the domain $\rr {4d}$ of the integral \eqref{eq:opSTFTest1} into three pieces. 
First we integrate over $\rr {4d} \setminus \Gamma_1'$ where we may use 
\eqref{eq:WFKcomplement}. 
Combined with \eqref{eq:STFTtempered} and \eqref{eq:STFTschwartz} this gives if $(x,\xi) \in (x_0, \xi_0) + \rB_\ep$ 
for some $k \in \no$ and any $n,N \in \no$
\begin{equation}\label{eq:opSTFTsubest1}
\begin{aligned}
& \int_{\rr {4d} \setminus \Gamma_1'} | V_\Phi K (y,z,\eta,-\theta) | \, |V_\fy \fy ( \lambda x-y, \lambda^s \xi-\eta) | \, | V_{\overline \fy} u(z,\theta) | \, \dd y \, \dd z \, \dd \eta \, \dd \theta \\
& \lesssim \int_{\rr {4d} \setminus \Gamma_1'} \eabs{(y,z,\eta,\theta)}^{-N} \, \eabs{ (\lambda x-y, \lambda^s \xi-\eta) }^{-n} \, \eabs{(z,\theta) }^k \, \dd y \, \dd z \, \dd \eta \, \dd \theta \\
& \lesssim \eabs{ (\lambda x, \lambda^s \xi) }^{-n} \int_{\rr {4d} \setminus \Gamma_1'} \eabs{(y,z,\eta,\theta)}^{-N} \, \eabs{(y,\eta) }^{n}  \, \eabs{(z,\theta) }^{k} \, \dd y \, \dd z \, \dd \eta \, \dd \theta \\
& \leqs \left( \lambda^{2 \min(1,s)} |(x,\xi)| ^2 \right)^{-\frac{n}{2}} \int_{\rr {4d}} \eabs{(y,z,\eta,\theta)}^{-N+ n + k} \dd y \, \dd z \, \dd \eta \, \dd \theta \\
& \lesssim \lambda^{- n \min(1,s)} 2^{n}
\end{aligned}
\end{equation}
provided $N$ is sufficiently large. 

It remains to estimate the integral \eqref{eq:opSTFTest1} over $(y,z,\eta, - \theta) \in \Gamma_1$ where
we may use \eqref{eq:Gamma1proportionality}. 
By \eqref{eq:emptyintersection} we have 
\begin{equation}\label{eq:Gamma1union}
\Gamma_1 \subseteq \Omega_0 \cup \Omega_2
\end{equation}
where 
\begin{equation*}
\Omega_0 = \Gamma_1 \setminus \pi_{1,3}^{-1} \overline{\wt \Gamma}_{z_0,2 \ep}, \quad
\Omega_2 = \Gamma_1 \setminus \pi_{2,-4}^{-1} \Gamma_2.
\end{equation*}

First we estimate the integral over $(y,z,\eta, - \theta) \in \Omega_2$. 
Then $(z,\theta) \in \rr {2d} \setminus \Gamma_2$ which is a closed $s$-conic set. 
By the compactness of $\sr {2d-1} \setminus \Gamma_2$ and \eqref{eq:WFgs2}
we obtain the estimates
\begin{equation*}
|V_{\overline \fy} u (z,\theta)| 
\lesssim \eabs{(z,\theta)}^{-N}, \quad (z,\theta) \in \rr {2d} \setminus \Gamma_2, \quad \forall N \geqs 0.
\end{equation*}
Together with \eqref{eq:Gamma1proportionality}, \eqref{eq:STFTtempered} and \eqref{eq:STFTschwartz} 
this gives if $(x,\xi) \in (x_0, \xi_0) + \rB_\ep$ 
for some $m \in \no$ and any $n \in \no$
\begin{equation}\label{eq:opSTFTsubest2}
\begin{aligned}
& \int_{\Omega_2'} | V_\Phi K (y,z,\eta,-\theta) | \, |V_\fy \fy ( \lambda x-y, \lambda^s \xi-\eta) | \, | V_{\overline \fy} u(z,\theta) | \, \dd y \, \dd z \, \dd \eta \, \dd \theta \\
& \lesssim \int_{\Omega_2'} \eabs{(y,z,\eta,\theta)}^{m} | \, \eabs{ (\lambda x-y, \lambda^s \xi-\eta) }^{-n} \, | V_{\overline \fy} u(z,\theta) | \, \dd y \, \dd z \, \dd \eta \, \dd \theta \\
& \lesssim \eabs{ (\lambda x, \lambda^s \xi) }^{-n} \int_{\Omega_2'} \eabs{(y,z,\eta,\theta)}^{- 4 d - 1} \, 
\eabs{(y,z,\eta,\theta)}^{m + 4 d + 1}
\eabs{(y,\eta)}^n | V_{\overline \fy} u(z,\theta) | \, \dd y \, \dd z \, \dd \eta \, \dd \theta \\
& \lesssim 
\lambda^{- n \min(1,s)} 2^{n} \\
& \qquad \times  \int_{\Omega_2'} \eabs{(y,z,\eta,\theta)}^{- 4 d - 1} \,
\eabs{(z,\theta)}^{(m + 4 d + 1) \left( 1 + \max \left( s,\frac1s \right) \right) + n \max \left( s,\frac1s \right) } | V_{\overline \fy} u(z,\theta) | \, \dd y \, \dd z \, \dd \eta \, \dd \theta \\
& \lesssim 
\lambda^{- n \min(1,s)} 
\sup_{w \in \rr {2d} \setminus \Gamma_2} \eabs{w}^{ (m + 4 d + 1) \left( 1 + \max \left( s,\frac1s \right) \right) + n \max \left( s,\frac1s \right) } | V_{\overline \fy} u(w) | \\
& \qquad \qquad \qquad \times \int_{\rr {4d}} \eabs{(y,z,\eta,\theta)}^{- 4 d - 1} \dd y \, \dd z \, \dd \eta \, \dd \theta \\
& \lesssim \lambda^{- n \min(1,s)}. 
\end{aligned}
\end{equation}

Finally we need to estimate the integral over $(y,z,\eta, - \theta) \in \Omega_0$. 
Then $(y,\eta) \in \rr {2d} \setminus \overline{\wt \Gamma}_{z_0, 2 \ep}$. 
Hence
\begin{equation*}
\left| z_0 - \left( \lambda^{-1} y, \lambda^{-s} \eta \right) \right| \geqs 2 \ep \quad \forall \lambda > 0 \quad \forall (y,\eta) \in \rr {2d} \setminus \overline{\wt \Gamma}_{z_0, 2 \ep}  
\end{equation*}
and we have for $(x,\xi) \in z_0 + \rB_\ep$
\begin{equation*}
\left| (x,\xi) - \left( \lambda^{-1} y, \lambda^{-s} \eta \right) \right| \geqs \ep \quad \forall \lambda > 0 \quad \forall (y,\eta) \in \rr {2d} \setminus \overline{\wt \Gamma}_{z_0, 2 \ep}. 
\end{equation*}

It follows that for $\lambda \geqs 1$, $(x,\xi) \in z_0 + \rB_\ep$ and 
$(y,\eta) \in \rr {2d} \setminus \overline{\wt \Gamma}_{z_0, 2 \ep}$ we have
\begin{align*}
\left| ( \lambda x, \lambda^s \xi) - (y,\eta) \right|^2 
& = \lambda^2 | x -  \lambda^{-1} y |^2 + \lambda^{2s} | \xi - \lambda^{-s} \eta |^2 \\
& \geqs \lambda^{2 \min (1,s)} \ep^2. 
\end{align*}

Together with \eqref{eq:Gamma1proportionality}, \eqref{eq:STFTtempered} and \eqref{eq:STFTschwartz} 
this gives if $(x,\xi) \in (x_0, \xi_0) + \rB_\ep$ 
for some $m,k \in \no$ and any $n,N \in \no$
\begin{equation}\label{eq:opSTFTsubest3}
\begin{aligned}
& \int_{\Omega_0'} | V_\Phi K (y,z,\eta,-\theta) | \, |V_\fy \fy ( \lambda x-y, \lambda^s \xi-\eta) | \, | V_{\overline \fy} u(z,\theta) | \, \dd y \, \dd z \, \dd \eta \, \dd \theta \\
& \lesssim \int_{\Omega_0'}  \eabs{(y,z,\eta,\theta)}^{m} \, \eabs{ (\lambda x-y, \lambda^s \xi-\eta) }^{-n-N} \, 
\eabs{(z,\theta)}^k
\, \dd y \, \dd z \, \dd \eta \, \dd \theta \\
& \lesssim 
\lambda^{- n \min(1,s)}
\int_{\Omega_0'} \eabs{(y,z,\eta,\theta)}^{- 4 d - 1} \, 
\eabs{ (\lambda x-y, \lambda^s \xi-\eta) }^{-N} \\
& \qquad \qquad \qquad \qquad \qquad \qquad \qquad \qquad \times
\eabs{(z,\theta)}^{ k + (m + 4 d + 1) \left( 1 + \max \left( s,\frac1s \right) \right) }
\, \dd y \, \dd z \, \dd \eta \, \dd \theta \\
& \lesssim 
\lambda^{- n \min(1,s)}
\eabs{ (\lambda x, \lambda^s \xi) }^{N}
\int_{\Omega_0'} \eabs{(y,z,\eta,\theta)}^{- 4 d - 1} \, \eabs{ (y, \eta) }^{-N} \\
& \qquad \qquad \qquad \qquad \qquad \qquad \qquad \qquad \times
\eabs{(z,\theta)}^{ k + (m + 4 d + 1) \left( 1 + \max \left( s,\frac1s \right) \right) }
\, \dd y \, \dd z \, \dd \eta \, \dd \theta \\
& \lesssim 
C_N \lambda^{- n \min(1,s) + N \max(1,s) } \\
& \qquad \times 
\int_{\rr {4d}} \eabs{(y,z,\eta,\theta)}^{- 4 d - 1} \,
\eabs{(z,\theta)}^{- N \min \left( s,\frac1s \right) + k + (m + 4 d + 1) \left( 1 + \max \left( s,\frac1s \right) \right)}
\, \dd y \, \dd z \, \dd \eta \, \dd \theta \\
& \lesssim 
C_N \lambda^{- n \min(1,s) + N \max(1,s) }
\end{aligned}
\end{equation}
if $N$ is large enough. 

Combining \eqref{eq:opSTFTsubest1}, \eqref{eq:opSTFTsubest2} and \eqref{eq:opSTFTsubest3}
and taking into account \eqref{eq:Gamma1union}, we have by \eqref{eq:opSTFTest1} shown
\begin{equation*}
\sup_{(x,\xi) \in (x_0, \xi_0) + \rB_\ep, \ \lambda > 0} \lambda^n | V_\fy(\cK u) ( \lambda x, \lambda^s \xi) | 
< + \infty \quad \forall n \geqs 0
\end{equation*}
which finally proves the claim $z_0 \notin \WF_{\rm g}^s (\cK u)$. 
\end{proof}

%%%%%%%%%%%%%%%%%%%%%%%%%%%%%%%%%%%%%%%%%%%%%%%%%%%%%%%
\section{Propagation of the $s$-Gabor wave front set for certain evolution equations}\label{sec:schrodinger}
%%%%%%%%%%%%%%%%%%%%%%%%%%%%%%%%%%%%%%%%%%%%%%%%%%%%%%%

In \cite[Remark~4.7]{Rodino3} we discuss the initial value Cauchy problem
for the evolution equation 
in dimension $d = 1$ 
\begin{equation}\label{eq:schrodeq2}
\left\{
\begin{array}{rl}
\partial_t u(t,x) + i D_x^{m} u (t,x) & = 0, \quad m \in \no \setminus 0, \quad x \in \ro, \quad t \in \ro, \\
u(0,\cdot) & = u_0. 
\end{array}
\right.
\end{equation}
It is a generalization of the Schr\"odinger equation for the free particle where $m = 2$. 

Here we generalize this equation into
\begin{equation}\label{eq:schrodeq3}
\left\{
\begin{array}{rl}
\partial_t u(t,x) + i p(D_x) u (t,x) & = 0, \quad x \in \rr d, \quad t \in \ro, \\
u(0,\cdot) & = u_0 
\end{array}
\right.
\end{equation}
where $p: \rr d \to \ro$ is a polynomial with real coefficients of order $m \geqs 2$, that is
\begin{equation}\label{eq:polynomial1}
p (\xi) = \sum_{|\alpha| \leqs m} c_\alpha \xi^\alpha, \quad c_\alpha \in \ro.  
\end{equation}
The principal part is 
\begin{equation}\label{eq:principalpart1}
p_m (\xi) = \sum_{|\alpha| = m} c_\alpha \xi^\alpha  
\end{equation}
and there exists $\alpha \in \nn d$ such that $|\alpha| = m$ and $c_\alpha \neq 0$.

The Hamiltonian is $p(\xi)$, and the Hamiltonian flow of the principal part $p_m(\xi)$ is given by
\begin{equation}\label{eq:hamiltonflow}
( x (t),\xi(t) ) = \chi_t (x, \xi)
= (x + t \nabla p_m (\xi), \xi), \quad t \in \ro, \quad (x, \xi) \in T^* \rr d \setminus 0.
\end{equation}

The explicit solution to \eqref{eq:schrodeq3} is 
\begin{equation}\label{eq:schrodingerpropagator}
u (t,x) 
= e^{- i t p(D_x)} u_0 
= (2 \pi)^{- \frac{d}{2}} \int_{\rr d} e^{i \la x, \xi \ra - i t p(\xi)} \wh u_0 (\xi) \dd \xi
\end{equation}
for $u_0 \in \cS(\rr d)$. 
Thus $u (t,x) = \cK_t u_0(x)$ where $\cK_t$ is the operator with Schwartz kernel
\begin{align*}
K_t (x,y) & = (2 \pi)^{-d} \int_{\rr d} e^{i \la x-y, \xi \ra - i t p(\xi)} \dd \xi \\
& =  (2 \pi)^{- \frac{d}{2}} \cF^{-1} (e^{- i t p}) (x-y). 
\end{align*}
The propagator $\cK_t$ is a convolution operator with kernel
\begin{equation}\label{eq:convolutionkernel}
k_t  =  (2 \pi)^{- \frac{d}{2}} \cF^{-1} (e^{- i t p}) \in \cS'(\rr d)
\end{equation}
and we may write
\begin{equation}\label{eq:schwartzkernel1}
K_t (x,y) 
=  \left( 1 \otimes k_t \right) \circ \kappa^{-1}(x,y) \in \cS'(\rr {2d})
\end{equation}
where $\kappa \in \rr {2d \times {2d}}$ is the matrix defined by $\kappa(x,y) = (x+\frac{y}{2}, x - \frac{y}{2})$ for $x,y \in \rr d$. 

It follows from \eqref{eq:schrodingerpropagator} that $\cK_t: \cS(\rr d) \to \cS(\rr d)$ is continuous, 
invertible with inverse $\cK_{-t}$, and $\cK_{-t} = \cK_t^*$ which denotes the adjoint. 
Defining for $u \in \cS'(\rr d)$
\begin{equation*}
(\cK_t u, \psi) = (u, \cK_t^* \psi), \quad \psi \in \cS(\rr d), 
\end{equation*}
gives a unique continuous extension $\cK_t: \cS'(\rr d) \to \cS'(\rr d)$.

As we will see now the continuity of $\cK_t$ on $\cS(\rr d)$ and the unique extension to a continuous operator on $\cS'(\rr d)$
may alternatively be proved as a consequence of 
$\WF_{\rm g,1}^s(K_t) = \WF_{\rm g,2}^s(K_t) = \emptyset$
and Proposition \ref{prop:opSTFTformula}. 

The next result shows that $\cK_t$ propagates the anisotropic $s$-Gabor wave front set along the Hamiltonian flow of $p_m$
if $s = \frac{1}{m-1}$,
whereas the anisotropic $s$-Gabor wave front set is invariant if $s < \frac{1}{m-1}$.
In the proof we use the symplectic matrix 
\begin{equation*}
\J =
\left(
\begin{array}{cc}
0 & I_d \\
-I_d & 0
\end{array}
\right) \in \rr {2d \times 2d}. 
\end{equation*}

\begin{thm}\label{thm:WFgskernel}
Let $m \geqs 2$ and let $p$ be defined 
by \eqref{eq:polynomial1}, \eqref{eq:principalpart1}, 
and denote by \eqref{eq:hamiltonflow} the Hamiltonian flow
of the principal part $p_m$. 
Suppose $\cK_t : \cS(\rr d) \to \cS(\rr d)$ is the continuous linear operator with Schwartz kernel 
\eqref{eq:schwartzkernel1} where $k_t$ is defined by \eqref{eq:convolutionkernel}. 
Then if $0 < s \leqs \frac{1}{m-1}$ we have for $u \in \cS' (\rr d)$ and $t \in \ro$

\begin{align}
& \WFgs   ( \cK_t u) = \chi_t  \left( \WFgs (u) \right), \quad s = \frac{1}{m-1}, \label{eq:propagation1} \\
& \WFgs  ( \cK_t u) = \WFgs  (u), \quad s < \frac{1}{m-1}. \label{eq:propagation2} 
\end{align}
\end{thm}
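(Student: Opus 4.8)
The plan is to reduce everything to a single computation of $\WFgs(K_t)$ for the Schwartz kernel \eqref{eq:schwartzkernel1}, after which the hypothesis \eqref{eq:WFKjempty}, Proposition \ref{prop:opSTFTformula}, Theorem \ref{thm:WFgspropagation} and the invertibility $\cK_t^{-1} = \cK_{-t}$ deliver both \eqref{eq:propagation1} and \eqref{eq:propagation2}. The first thing I would record are the two transformation rules that the special structure of $K_t$ makes available. Since $K_t(x,y) = k_t(x-y)$ depends on the difference only, \eqref{eq:schwartzkernel1} writes it as $(1 \otimes k_t) \circ \kappa^{-1}$ with $\kappa$ linear and acting on the \emph{position} variables alone. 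The key observation is that such a position-linear $B$ commutes with the anisotropic dilation $(z,\zeta) \mapsto (\lambda z, \lambda^s \zeta)$ (unlike a general symplectic map, which would mix position and frequency and destroy the anisotropic scaling); hence, using the STFT covariance $V_\fy (f \circ B)(z,\zeta) \asymp V_{\fy \circ B^{-1}} f(Bz, B^{-T}\zeta)$ and window independence from \cite{Rodino4}, one gets the clean rule $\WFgs(f \circ B) = \{ (B^{-1}z, B^T\zeta): (z,\zeta) \in \WFgs(f) \}$. Together with the tensorization Proposition \ref{prop:tensorWFs} and $\WFgs(1) = \{ (z,0): z \neq 0 \}$, this expresses $\WFgs(K_t)$ purely in terms of $\WFgs(k_t)$.

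The core step is therefore the determination of $\WFgs(k_t)$, where $\wh{k_t} = c\,e^{-itp}$ is a chirp of degree $m$. I would compute this by applying the chirp result \cite[Theorem~7.1]{Rodino4} to $\wh{k_t}$ and transporting through the Fourier transform, which intertwines $\WFgs$ with the $(1/s)$-anisotropic wave front set via the rotation $(w,\zeta) \mapsto (\zeta,-w)$, that is $(w,\zeta) \in \WFgs(k_t) \iff (\zeta,-w) \in \WF_{\rm g}^{1/s}(\wh{k_t})$. The dichotomy in the statement is exactly the dichotomy between $1/s = m-1$ and $1/s > m-1$ for the degree-$m$ chirp $\wh{k_t}$: its ridge $\{ (\xi, -t\nabla p(\xi)) \}$ has frequency of size $|\xi|^{m-1}$, so under the $\WF_{\rm g}^{1/s}$-projection it tends to a genuine graph direction when $1/s = m-1$ but collapses onto the position axis $\{ (\xi,0) \}$ when $1/s > m-1$. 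Dualizing, for $s = \tfrac{1}{m-1}$ one gets $\WFgs(k_t) = \{ (t\nabla p_m(\zeta), \zeta): \zeta \neq 0 \}$, while for $s < \tfrac{1}{m-1}$ one gets the frequency axis $\WFgs(k_t) = \{ (0,\zeta): \zeta \neq 0 \}$; in both cases the bounds \eqref{eq:lambdaboundnonisotropic}, \eqref{eq:lambdaboundisotropic} show that the lower-order terms of $p$ are dominated after the anisotropic projection, so only $p_m$ survives.

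Feeding these into the transformation rules yields, for $s = \tfrac{1}{m-1}$, $\WFgs(K_t) \subseteq \{ (x,y,\xi,-\xi): x-y = t\nabla p_m(\xi) \}$ together with the harmless component $\{ (a,a,0,0): a \neq 0 \}$, and for $s < \tfrac{1}{m-1}$ the twisted diagonal $\WFgs(K_t) \subseteq \{ (x,x,\xi,-\xi): (x,\xi) \neq 0 \}$. Either set meets neither $\{ (x,0,\xi,0) \}$ nor $\{ (0,y,0,-\eta) \}$, so \eqref{eq:WFKjempty} holds, Proposition \ref{prop:opSTFTformula} applies, and Theorem \ref{thm:WFgspropagation} gives $\WFgs(\cK_t u) \subseteq \WFgs(K_t)' \circ \WFgs(u)$. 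Evaluating the composition directly, using the homogeneity of $\nabla p_m$ and $\nabla p_m(0) = 0$ (valid since $m \geq 2$), shows $\WFgs(K_t)' \circ \WFgs(u) = \chi_t(\WFgs(u))$ in the first case and $= \WFgs(u)$ in the second, which is the inclusion ``$\subseteq$'' in \eqref{eq:propagation1}, \eqref{eq:propagation2}.

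The reverse inclusions come for free from invertibility. Since $K_{-t}$ satisfies the same hypotheses (replace $t$ by $-t$, giving the flow $\chi_{-t} = \chi_t^{-1}$) and $\cK_{-t} = \cK_t^{-1}$, applying the inclusion just proved to $\cK_{-t}$ and the distribution $\cK_t u$ gives $\WFgs(u) = \WFgs(\cK_{-t} \cK_t u) \subseteq \chi_{-t}(\WFgs(\cK_t u))$ (respectively $\subseteq \WFgs(\cK_t u)$); applying $\chi_t$ then yields $\chi_t(\WFgs(u)) \subseteq \WFgs(\cK_t u)$ (respectively $\WFgs(u) \subseteq \WFgs(\cK_t u)$), closing both equalities. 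The step I expect to be the real obstacle is the precise determination of $\WFgs(k_t)$: one must control the anisotropic projection of the chirp ridge \emph{uniformly over phase-space neighborhoods} — the non-decay at the frequency axis for $s < \tfrac{1}{m-1}$ comes from nearby curves crossing the ridge at ever larger $\lambda$, not from the axis curve itself — and must check that the lower-order part of $p$ contributes no extra directions. This is precisely where \cite[Theorem~7.1]{Rodino4} and the weight comparisons \eqref{eq:lambdaboundnonisotropic}, \eqref{eq:lambdaboundisotropic} carry the technical load.
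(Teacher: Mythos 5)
Your proposal is correct and follows essentially the same route as the paper: compute $\WFgs(k_t)$ from the chirp theorem together with the Fourier-transform and reflection rules of \cite{Rodino4}, lift this to $\WFgs(K_t)$ via tensorization (Proposition \ref{prop:tensorWFs}) and the linear change of variables $\kappa$, verify \eqref{eq:WFKjempty} so that Proposition \ref{prop:opSTFTformula} and Theorem \ref{thm:WFgspropagation} apply, and recover the reverse inclusions from $\cK_t^{-1}=\cK_{-t}$ and $\chi_{-t}=\chi_t^{-1}$. The only point worth flagging is that the collapse $\WFg^{1/s}(e^{-itp}) \subseteq (\rr d \setminus 0)\times\{0\}$ for $1/s > m-1$, which you argue heuristically from the ridge geometry, is not a consequence of \cite[Theorem~7.1]{Rodino4} but is precisely the separate statement \cite[Theorem~7.2]{Rodino4}, which is what the paper cites for the case $s < \frac{1}{m-1}$.
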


\begin{proof}
First let $s = \frac{1}{m-1}$. 
By \cite[Theorem~7.1]{Rodino4} we have 
\begin{equation*}
\WF_{\rm g}^{m-1} ( e^{- i t p} ) 
\subseteq \{ (x, - t \nabla p_m(x) ) \in T^* \rr d: \ x \neq 0 \}, 
\end{equation*}
and from \cite[Eq.~(4.6) and Proposition~4.3 (i)]{Rodino4} we obtain
\begin{align*}
\WF_{\rm g}^{s} ( k_t ) 
& = \WF_{\rm g}^{s} ( \cF^{-1} e^{- i t p} ) 
= - \WF_{\rm g}^{s} ( \cF e^{- i t p} ) \\
& = - \J \WF_{\rm g}^{m-1} ( e^{- i t p} ) \\
& \subseteq \{ ( t \nabla p_m(x)  , x) \in T^* \rr d: \ x \neq 0 \}. 
\end{align*}

Now  \eqref{eq:schwartzkernel1}, \cite[Proposition~4.3~(ii)]{Rodino4}, Proposition \ref{prop:tensorWFs} and \cite[Proposition~5.3 (iii)]{Rodino4} yield
\begin{align*}
& \WFgs (K_t)
= \WFgs ( \left( 1 \otimes k_t \right) \circ \kappa^{-1} ) \\
& = 
\left( 
\begin{array}{cc}
  \kappa & 0 \\
  0 & \kappa^{-T}
  \end{array}
\right) 
\WFgs \left( 1 \otimes k_t \right) \\
& \subseteq \{ ( \kappa( x_1, x_2), \kappa^{-T}(\xi_1, \xi_2) ) \in T^* \rr {2d}: \\
& \qquad \qquad \qquad \qquad (x_1, \xi_1) \in \WFgs (1) \cup \{ 0 \},  
(x_2, \xi_2) \in \WFgs (k_t) \cup \{ 0 \} \} \setminus 0 \\
& = \{ ( \kappa( x_1, t \nabla p_m(x_2) ), \kappa^{-T} (0, x_2) ) \in T^* \rr {2d}: \ x_1, x_2 \in \rr d \} \setminus 0 \\
& = \left\{ \left( x_1 + t \frac12 \nabla p_m(x_2), x_1 - t \frac12 \nabla p_m(x_2), x_2, - x_2 \right) \in T^* \rr {2d}: \ x_1, x_2 \in \rr d  \right\} \setminus 0 \\
& = \left\{ \left( x_1 + t \nabla p_m(x_2), x_1, x_2, - x_2 \right) \in T^* \rr {2d}: \ x_1, x_2 \in \rr d \right\} \setminus 0. 
\end{align*}

Since $m \geqs 2$ we have $\nabla p_m(0) = 0$ and 
$\WF_{\rm g,1}^s(K_t) = \WF_{\rm g,2}^s(K_t) = \emptyset$ follows. 
By Proposition \ref{prop:opSTFTformula} we thus obtain an alternative proof of the already known fact that 
$\cK_t$ is continuous 
on $\cS(\rr d)$ and extends uniquely to be continuous 
on $\cS'(\rr d)$. 
Invertibility also follows since $\cK_t^{-1} = \cK_{-t}$. 
Moreover we may apply Theorem \ref{thm:WFgspropagation} which gives for $u \in \cS'(\rr d)$
\begin{align*}
\WFgs( \cK_t u) 
& \subseteq \WFgs (K_t)' \circ \WFgs (u) \\
& = \{ (x,\xi) \in T^* \rr d: \ \exists (y,\eta) \in \WFgs (u), \ (x,y, \xi, - \eta) \in \WFgs (K_t) \} \\
& \subseteq \{ ( x_1 + t \nabla p_m(x_2) , x_2) \in T^* \rr d: \ (x_1,x_2) \in \WFgs (u) \} \\
& = \chi_t  \left( \WFgs (u) \right).
\end{align*}

The opposite inclusion follows from $\cK_{t}^{-1} = \cK_{-t}$, 
\begin{equation*}
\WFgs( u) 
= \WFgs( \cK_{-t}  \cK_t u) \\
\subseteq \chi_{-t}  \left( \WFgs ( \cK_t u) \right)
\end{equation*}
and $\chi_{-t} = \chi_t^{-1}$. 
We have proved \eqref{eq:propagation1}.

It remains to consider the case $s < \frac1{m-1}$. 
By \cite[Theorem~7.2]{Rodino4} we have 
\begin{equation*}
\WFg^{\frac1{s}} ( e^{- i t p} ) 
\subseteq ( \rr d \setminus 0 ) \times \{ 0 \}
\end{equation*}
and from \cite[Eq.~(4.6) and Proposition~4.3 (i)]{Rodino4} we obtain
\begin{equation*}
\WFgs ( k_t ) 
= - \J \WFg^{\frac1{s}} ( e^{- i t p} ) \\
\subseteq \{ 0 \} \times ( \rr d \setminus 0 ).  
\end{equation*}

Again \eqref{eq:schwartzkernel1}, \cite[Propositions~4.3 (ii) and 5.3 (iii)]{Rodino4}, 
and Proposition \ref{prop:tensorWFs} yield
\begin{align*}
\WFgs (K_t)
& \subseteq \{ ( \kappa( x_1, x_2), \kappa^{-T}(\xi_1, \xi_2) ) \in T^* \rr {2d}: \\
& \qquad \qquad \qquad \qquad (x_1, \xi_1) \in \WFgs (1) \cup \{ 0 \},  
\ (x_2, \xi_2) \in \WFgs (k_t) \cup \{ 0 \} \} \setminus 0 \\
& \subseteq \{ ( \kappa( x_1, 0 ), \kappa^{-T}(0, x_2) \in T^* \rr {2d}: \ x_1, x_2 \in \rr d \} \setminus 0 \\
& = \left\{ \left( x_1 , x_1, x_2, - x_2 \right) \in T^* \rr {2d}: \ x_1, x_2 \in \rr d \right\} \setminus 0. 
\end{align*}

Again we have $\WF_{\rm g,1}^s(K_t) = \WF_{\rm g,2}^s(K_t) = \emptyset$, 
and Proposition \ref{prop:opSTFTformula} 
gives continuity on $\cS (\rr d)$ and on $\cS' (\rr d)$; 
the invertibility also follows since $\cK_t^{-1} = \cK_{-t}$. 
Now Theorem \ref{thm:WFgspropagation} gives for $u \in \cS'(\rr d)$
\begin{equation*}
\WFgs ( \cK_t u) 
\subseteq \WFgs (K_t)' \circ \WFgs (u) 
\subseteq \WFgs (u).
\end{equation*}

The opposite inclusion again follows from $\cK_{t}^{-1} = \cK_{-t}$ and $\chi_{-t} = \chi_t^{-1}$. 
We have proved \eqref{eq:propagation2}.
\end{proof}

\begin{rem}\label{rem:continuityWFK2}
If $s > \frac1{m-1}$ and $p_m (x) \neq 0$ for all $x \in \rr d \setminus 0$
then by \cite[Theorem~7.3]{Rodino4} 
\begin{equation*}
\WFg^{\frac1{s}} ( e^{- i t p} ) 
\subseteq \{ 0 \} \times ( \rr d \setminus 0 ) 
\end{equation*}
so \cite[Eq.~(4.6) and Proposition~4.3 (i)]{Rodino4} give
\begin{equation*}
\WFgs ( k_t ) 
= - \J \WFg^{\frac1{s}} ( e^{- i t p} ) \\
\subseteq ( \rr d \setminus 0 ) \times \{ 0 \}.  
\end{equation*}

Again \eqref{eq:schwartzkernel1}, \cite[Propositions~4.3 (ii) and 5.3 (iii)]{Rodino4}, 
and Proposition \ref{prop:tensorWFs} yield
\begin{align*}
& \WFgs (K_t)
\subseteq \{ ( \kappa( x_1, x_2), \kappa^{-T}(\xi_1, \xi_2) ) \in T^* \rr {2d}: \\
& \qquad \qquad \qquad \qquad (x_1, \xi_1) \in \WFgs (1) \cup \{ 0 \},  
\ (x_2, \xi_2) \in \WFgs (k_t) \cup \{ 0 \} \} \setminus 0 \\
& \subseteq \{ ( \kappa( x_1, x_2 ), \kappa^{-T}(0,0) \in T^* \rr {2d}: \ x_1, x_2 \in \rr d \} \setminus 0 \\
& = ( \rr {2d} \setminus 0 ) \times \{ 0 \}. 
\end{align*}
In this case we cannot conclude that $\WF_{\rm g,1}^s(K_t)$ and $\WF_{\rm g,2}^s(K_t)$ are empty.

Thus we cannot conclude any statement on propagation of the anisotropic $s$-Gabor wave front set 
from Theorem \ref{thm:WFgspropagation} when $s > \frac1{m-1}$. 
\end{rem}

\section*{Acknowledgment}
Work partially supported by the MIUR project ``Dipartimenti di Eccellenza 2018-2022'' (CUP E11G18000350001).

%%%%%%%%%%%%%%%%%%%%%%%%%%%%%

\end{document}